\theoremstyle{plain}
\newtheorem{theorem}{Theorem}[section]
\newtheorem{remark}{Remark}[section]
\newtheorem{lemma}{Lemma}[section]
\newtheorem{proposition}{Proposition}[section]
\title[The rigidity theorem for complete Lagrangian self-shrinkers]
{The rigidity theorem for complete Lagrangian self-shrinkers}
\author [Z. Li, R. X. Wang and G. Wei]{Zhi Li, Ruixin Wang and Guoxin Wei}
\address{Zhi Li \\  \newline \indent College of Mathematics and Information Science, Henan Normal University,
\newline \indent 453007, Xinxiang, Henan, China.}
\email{lizhihnsd@126.com}
\address{Ruixin Wang\\  \newline \indent College of Mathematics and Information Science, Henan Normal University,
\newline \indent 453007, Xinxiang, Henan, China.}
\email{wangruixin@htu.edu.cn}
\address{Guoxin Wei \\ \newline \indent School of Mathematical Sciences, South China Normal University,
\newline \indent 510631, Guangzhou,  China.}
\email{weiguoxin@tsinghua.org.cn}
\begin{document}
\maketitle

\begin{abstract}
In this paper, we obtain a rigidity result of $2$-dimensional complete lagrangian self-shrinkers with constant squared norm $|\vec{H}|^{2}$ of the mean curvature vector in the Euclidean space $\mathbb{R}^{4}$. The same idea is also used to give a similar result of Lagrangian $\xi$-submanifolds in $\mathbb{R}^{4}$.
\end{abstract}

\footnotetext{2020 \textit{Mathematics Subject Classification}:
53E10, 53C40.}
\footnotetext{{\it Key words and phrases}: Mean curvature flow, Lagrangian self-shrinker, Maximum principle.}

\section{introduction}
\vskip2mm
\noindent

An immersion $x: M^{n}\to \mathbb{R}^{n+p}$ of a smooth manifold $M^{n}$ into Euclidean space is said to be a self-shrinker if it satisfies the quasilinear elliptic system
\begin{align}\label{1.1-1}
\vec{H}+x^{\perp}=0,
\end{align}
where $\vec{H}$ is the mean curvature vector and $x^{\perp}$ is the orthogonal projection of the position vector $x$ in $\mathbb{R}^{n+p}$ to the normal bundle of $M^{n}$.
It is well known that the solutions of \eqref{1.1-1} not only give rise to
homothetically shrinking solutions of the mean curvature flow but also they
play an interesting role in the formation of type-1 singularities(\cite{Hui90}). Besides,
self-shrinkers may equivalently be interpreted as critical points for the Gaussian area $\int_{M}e^{-|x|^{2}/2}d\sigma$,
hence also as minimal hypersurfaces for the conformal metric $g_{ij}=e^{-|x|^{2}/n}\delta_{ij}$.
In fact, classification and rigidity of self-shrinkers have been studied extensively for  hypersurfaces or arbitrary codimension. To learn more about them, please refer to (\cite{CL13}, \cite{CP15},  \cite{CW15}, \cite{CZ13}, \cite{CW}, \cite{DX13}, \cite{DX14}, \cite{Hui90}, \cite{LN11}, \cite{LW14}, \cite{Smo05}, \cite{XX17}) and the references therein.

It is known that the mean curvature flow keep invariant of the Lagrangian property, which means that if the initial submanifold $x: M^{n}\to \mathbb{R}^{2n}$ is Lagrangian, then the mean curvature flow $x(\cdot, t): M^{n}\to \mathbb{R}^{2n}$ is also Lagrangian.
From view points of submanifolds theory, it is also very natural to study rigidity
and classification theorems for the Lagrangian self-shrinkers. In (\cite{Cas}), Castro and
Lerma gave a classification of Hamiltonian stationary Lagrangian self-shrinkers in
$\mathbb{R}^{4}$
and in (\cite{Cas1}),  they provided several rigidity results for the Clifford torus in the class of compact self-shrinkers for Lagrangian mean curvature flow. In 2017, Li and Wang (\cite{LW3}) prove a rigidity theorem which improves a previous theorem
by Castro and Lerma (\cite{Cas1}). Later, Cheng, Hori and Wei (\cite{CHW}) established an interesting classification theorem for complete Lagrangian self-shrinkers with
constant squared norm of the second fundamental form in $\mathbb{R}^{4}$. To see the details for Lagrangian self-shrinkers, readers are referred to (\cite{Anc06}, \cite{LW10},  \cite{Neves11}, \cite{Smo00}).

Specifically, Castro and Lerma (\cite{Cas1}) proved that Clifford torus $S^{1}(1)\times S^{1}(1)$ is the only compact Lagrangian self-shrinker with constant squared norm $|\vec{H}|^{2}$ of the mean curvature vector in $\mathbb{R}^{4}$.
It is natural to ask the following problems:
To classify $2$-dimensional complete Lagrangian self-shrinkers in $\mathbb{R}^{4}$
if the squared norm $|\vec{H}|^{2}$ of the mean curvature vector is constant.
For the above problem, we solve it under the assumption of the squared norm of the second fundamental form being bounded from above.  In fact, we prove the following:

\begin{theorem}\label{theorem 1.1}
Let $x: M^{2}\to \mathbb{R}^{4}$ be a
$2$-dimensional complete Lagrangian self-shrinker with constant squared norm $|\vec{H}|^{2}$ of the mean curvature vector in $\mathbb{R}^{4}$. If the squared norm of the second fundamental form is bounded from above, then $x(M^{2})$ is isometric to one of
$\mathbb {R}^{2}$, $S^{1}(1)\times\mathbb {R}^{1}$ and $S^{1}(1)\times S^{1}(1)$.
\end{theorem}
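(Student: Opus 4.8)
I would work with the Lagrangian angle $\theta$ and the drift Laplacian $\mathcal{L}=\Delta-\langle x,\nabla\,\cdot\,\rangle=\Delta_f$, $f=|x|^{2}/2$, using a local orthonormal frame $\{e_1,e_2\}$ of $TM$ with adapted normal frame $\{e_{1^{\ast}},e_{2^{\ast}}\}=\{Je_1,Je_2\}$, so that the second fundamental form is recorded in the totally symmetric cubic form $C_{ijk}=\langle h(e_i,e_j),e_{k^{\ast}}\rangle$. Two standard facts drive the argument: on any Lagrangian self-shrinker $\nabla\theta=-J\vec{H}=Jx^{\perp}$, hence $|\nabla\theta|^{2}=|\vec{H}|^{2}$, and (using $\vec{H}=-x^{\perp}$, $x^{T}=\nabla f$ and Codazzi) $\Delta\theta=\langle x,\nabla\theta\rangle$, i.e.\ $\mathcal{L}\theta=0$; and for a self-shrinker $\mathrm{Ric}_f=g-\sum_k h(\,\cdot\,,e_k)\cdot h(\,\cdot\,,e_k)$, so $\mathrm{Ric}_f\ge(1-\sup|A|^{2})g$ is bounded below by hypothesis. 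By assumption $|\nabla\theta|^{2}\equiv c:=|\vec{H}|^{2}$ is constant. If $c=0$, then $\theta$ is constant and $M$ is a minimal self-shrinker; minimality and $\vec{H}=-x^{\perp}$ force $x^{\perp}\equiv0$, so $M$ is a smooth complete minimal cone through the origin, i.e.\ a $2$-plane, and $x(M^{2})=\mathbb{R}^{2}$.

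\textbf{The Bochner identity.} Assume $c>0$. Then $e_1:=\nabla\theta/\sqrt{c}$ is a global unit tangent field and $\vec{H}=\sqrt{c}\,e_{1^{\ast}}$, i.e.\ $C_{111}+C_{122}=\sqrt{c}$ and $C_{112}+C_{222}=0$. Applying the weighted Bochner formula to $\theta$, with $\mathcal{L}\theta=0$ and $|\nabla\theta|^{2}$ constant, gives $0=|\nabla^{2}\theta|^{2}+\mathrm{Ric}_f(\nabla\theta,\nabla\theta)$; since $\mathrm{Ric}_f(\nabla\theta,\nabla\theta)=c\big(1-(C_{111}^{2}+2C_{112}^{2}+C_{122}^{2})\big)$, one obtains
\[
|A|^{2}=C_{111}^{2}+4C_{112}^{2}+3C_{122}^{2}=1+\tfrac{1}{c}|\nabla^{2}\theta|^{2}+2\big(C_{112}^{2}+C_{122}^{2}\big).
\]
Because $|\nabla\theta|$ is constant, in this frame the only nonzero component of $\nabla^{2}\theta$ is $\nabla^{2}\theta(e_2,e_2)=\Delta\theta=\langle x,\nabla\theta\rangle=\sqrt{c}\,\langle x,e_1\rangle$, so $|\nabla^{2}\theta|^{2}=(\Delta\theta)^{2}$ and $|A|^{2}=1+\langle x,e_1\rangle^{2}+2(C_{112}^{2}+C_{122}^{2})$.

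\textbf{The maximum principle (the crux).} Since $|A|^{2}$ is bounded above, the last identity forces $\langle x,e_1\rangle$, $C_{112}$ and $C_{122}$ to be bounded. I would then evaluate $\mathcal{L}$ on a suitable auxiliary function built from these — the natural candidates being $\langle x,\nabla\theta\rangle\,(=\Delta\theta)$, its square, or $|A|^{2}$ itself — and combine the resulting differential inequality with the generalized maximum principle for $\mathcal{L}$ (valid because $\mathrm{Ric}_f$ is bounded below) to conclude $\Delta\theta\equiv0$, equivalently $\nabla^{2}\theta\equiv0$. I expect this to be the main difficulty: one must steer the computation so that the maximum principle actually closes, and, since in codimension two the Simons-type quartic entering such formulas is not sign-definite, the total symmetry of $C$ together with the structural relations above (which encode the constancy of $|\vec{H}|^{2}$) must be used to control it. Granting $\nabla^{2}\theta\equiv0$: $\nabla\theta$ is parallel, so $M$ is flat, the adapted frame is parallel, $\nabla^{\perp}\vec{H}\equiv0$ (note $\nabla^{2}\theta(e_i,e_j)=\langle\nabla^{\perp}_{e_i}\vec{H},e_{j^{\ast}}\rangle$) and $\nabla^{\perp}_{e_i}\vec{H}=h(e_i,x^{T})\equiv0$; also $\Delta\theta\equiv0$ gives $\langle x,e_1\rangle\equiv0$, hence $x^{T}=\langle x,e_2\rangle\,e_2$.

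\textbf{Classification.} If $x^{T}\equiv0$ then $x=-\vec{H}$, so $|x|^{2}\equiv c$; since $\Delta|x|^{2}=2n-2|\vec{H}|^{2}=4-2c$ must vanish, $c=2$, and $\vec{H}=-x$ is normal to the sphere $S^{3}(\sqrt{2})$, so $x(M^{2})$ is a complete flat minimal surface in $S^{3}(\sqrt{2})$, hence a Clifford torus, and is isometric to $S^{1}(1)\times S^{1}(1)$. If $x^{T}\not\equiv0$, then on the open set $\{\langle x,e_2\rangle\ne0\}$ we get $h(e_2,\,\cdot\,)\equiv0$, hence $C_{112}=C_{122}=C_{222}=0$ there and, by real-analyticity of self-shrinkers, on all of $M$; thus $h(e_1,e_1)=\sqrt{c}\,e_{1^{\ast}}$, $h(e_1,e_2)=h(e_2,e_2)=0$, so $x(M^{2})$ is a Riemannian product of a complete planar self-shrinking curve of constant curvature $\sqrt{c}$ with a line through the origin. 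Such a curve is the circle $S^{1}(1)$, so $c=1$ and $x(M^{2})$ is isometric to $S^{1}(1)\times\mathbb{R}^{1}$. Together with the case $c=0$ this yields exactly the three asserted possibilities.
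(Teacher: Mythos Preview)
Your framework via the Lagrangian angle is correct and attractive: $\mathcal{L}\theta=0$, the weighted Bochner identity collapses to $0=|\nabla^{2}\theta|^{2}+\mathrm{Ric}_{f}(\nabla\theta,\nabla\theta)$ exactly as you write, the identification $\nabla^{2}\theta\equiv0\Leftrightarrow\nabla^{\perp}\vec{H}\equiv0$ is right, and your post-crux classification (flat, parallel frame, split into $x^{T}\equiv0$ versus $x^{T}\not\equiv0$) is sound and self-contained. The paper, by contrast, never aims at $\nabla^{\perp}\vec{H}=0$: it applies the generalized maximum principle to $S$ and to $-S$, runs a long case analysis to force $\sup S=\inf S\in\{1,2\}$ matching $H^{2}$, and then \emph{invokes} the Cheng--Hori--Wei classification of Lagrangian self-shrinkers with constant $S$. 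So the two routes diverge precisely at your crux; if it could be closed, your proof would be shorter and would not need the external classification.

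The crux, however, is a genuine gap, and the paper's argument shows why a one-shot maximum-principle computation on any of your candidates is unlikely to close. The obstruction is the cubic covariant-derivative terms: in the $\mathcal{L}$-formula for $|\nabla^{\perp}\vec{H}|^{2}$ (the paper's \eqref{2.1-14}) one meets, besides $\sum(H^{p^{\ast}}_{,ij})^{2}$ and the curvature term $(3K+2)|\nabla^{\perp}\vec{H}|^{2}$, indefinite interaction terms of the form $\sum H^{p^{\ast}}_{,i}h^{p^{\ast}}_{ijk}h^{q^{\ast}}_{jk}H^{q^{\ast}}$ and $\sum H^{p^{\ast}}_{,i}h^{p^{\ast}}_{il}h^{q^{\ast}}_{kl}H^{q^{\ast}}_{,k}$, which are controlled neither by the bound on $|A|^{2}$ nor by any sign coming from the Lagrangian symmetry. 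The paper tames them only by working at an Omori--Yau sequence for $S$, where the \emph{additional} information $\nabla S\to0$ produces extra linear relations among the $h^{p^{\ast}}_{ijk}$ (equations \eqref{3.1-6}--\eqref{3.1-7}); even then the argument does not close by an inequality but branches on whether $\bar h^{1^{\ast}}_{11}\bar h^{1^{\ast}}_{22}-(\bar h^{2^{\ast}}_{11})^{2}$ vanishes, whether $\bar h^{2^{\ast}}_{11}=0$, and whether $H^{2}=1$, with two subcases eliminated only by driving $\sum(H^{p^{\ast}}_{,ij})^{2}$ negative via the second-order identity \eqref{2.1-16}. None of your three candidate auxiliary functions supplies a substitute for the relation $\nabla S\to0$, so unless you can produce another coercive combination, the step you flag as ``the main difficulty'' really is essentially the entire content of the proof.
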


It is known that the concept of self-shrinker
of the mean curvature flow can be naturally generalized to the $\xi$-submanifold. Namely,
$M^{n}$ is called a $\xi$-submanifold if $\xi=\vec{H}+x^{\perp}$
is parallel in the normal bundle. It is easy to see that when $\xi=0$, $M^{n}$ is a self-shrinker. Thus, by an application of Theorem \ref{theorem 1.1} and a theorem
of Hoffman (Theorem 4.1, \cite{Hoff}), we can easily obtain the following more general result.

\begin{theorem}\label{theorem 1.2}
Let $x: M^{2}\to \mathbb{R}^{4}$ be a $2$-dimensional complete Lagrangian $\xi$-submanifold with constant squared norm $|\vec{H}|^{2}$ of the mean curvature vector in $\mathbb{R}^{4}$. If the squared norm of the second fundamental form is bounded from above, then $x(M^{2})$ must be isometric to one of the following canonical submanifolds:
\begin{enumerate}
\item a $2$-plane not necessarily passing through the origin;
\item a a circular cylinder $S^{1}(r)\times\mathbb {R}^{1}$ for some $r>0$;
\item  a Clifford torus $S^{1}(r_{1})\times S^{1}(r_{2})$ for some $r_{1}, \ r_{2}>0$
\end{enumerate}
\end{theorem}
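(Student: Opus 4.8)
The plan is to deduce Theorem~\ref{theorem 1.2} from Theorem~\ref{theorem 1.1} by a case analysis governed by the parallel normal field $\xi=\vec{H}+x^{\perp}$. Since $\xi$ is parallel in the normal bundle, $e_{i}(|\xi|^{2})=2\langle\nabla^{\perp}_{e_{i}}\xi,\xi\rangle=0$, so $|\xi|$ is constant on the connected surface $M^{2}$, and it suffices to treat $\xi\equiv 0$ and $|\xi|\equiv c>0$ separately. If $\xi\equiv 0$, then $x$ is a complete Lagrangian self-shrinker with constant $|\vec{H}|^{2}$ and second fundamental form bounded from above, so Theorem~\ref{theorem 1.1} applies and $x(M^{2})$ is isometric to $\mathbb{R}^{2}$, $S^{1}(1)\times\mathbb{R}^{1}$, or $S^{1}(1)\times S^{1}(1)$, each of which already occurs among (1)--(3).

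For the case $|\xi|\equiv c>0$, set $\nu_{1}=\xi/c$, a parallel unit normal. Because $M^{2}$ is Lagrangian in $\mathbb{R}^{4}$ the normal bundle has rank $2$, so the complementary unit normal $\nu_{2}$ satisfies $\langle\nabla^{\perp}_{X}\nu_{2},\nu_{1}\rangle=-\langle\nu_{2},\nabla^{\perp}_{X}\nu_{1}\rangle=0$ and $\langle\nabla^{\perp}_{X}\nu_{2},\nu_{2}\rangle=0$, hence $\nu_{2}$ is parallel as well; the normal connection is therefore flat and, by the Ricci equation, $A_{\nu_{1}}$ and $A_{\nu_{2}}$ commute. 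On the open set where they are not simultaneously proportional to the identity, a common eigenframe $\{e_{1},e_{2}\}$ gives $h(e_{1},e_{2})=0$, and then the total symmetry of the cubic form $C(X,Y,Z)=\langle h(X,Y),JZ\rangle$ forces $C_{112}=C_{122}=0$, so $h(e_{1},e_{1})=C_{111}Je_{1}$ and $h(e_{2},e_{2})=C_{222}Je_{2}$. Differentiating $\xi=\vec{H}+x^{\perp}$ in a parallel normal frame yields $\nabla^{\perp}_{X}\vec{H}=h(X,x^{\top})$, while constancy of $|\vec{H}|^{2}=C_{111}^{2}+C_{222}^{2}$ gives $A_{\vec{H}}x^{\top}=0$. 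Feeding these into the Codazzi equations, I expect the connection coefficients of $\{e_{1},e_{2}\}$ to satisfy a linear system with coefficient determinant $-|\vec{H}|^{2}$, so that either $\vec{H}\equiv 0$ or $\{e_{1},e_{2}\}$ is parallel; in the latter case $C_{111}$ and $C_{222}$ are constant, $M^{2}$ is intrinsically flat, and (after extending across the umbilic locus by continuity and analyticity) splits globally as a Riemannian product of two complete curves of constant geodesic curvature, i.e. of circles and lines. In particular $\vec{H}$ is parallel in the normal bundle.

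With $\vec{H}$ parallel, $x:M^{2}\to\mathbb{R}^{4}$ is a complete surface with parallel mean curvature vector and second fundamental form bounded from above, so Hoffman's Theorem~4.1 in \cite{Hoff} restricts it to a minimal surface of $\mathbb{R}^{4}$, a minimal surface of a round hypersphere, a constant mean curvature surface in an affine $\mathbb{R}^{3}$, or a product $S^{1}(r_{1})\times S^{1}(r_{2})$. The Lagrangian condition then eliminates the exotic possibilities: a complete constant mean curvature surface of revolution in $\mathbb{R}^{3}$ is Lagrangian in $\mathbb{R}^{4}$ only when its profile curve keeps constant distance to the axis, which excludes the Delaunay unduloids and leaves the round cylinder; and if $\vec{H}\equiv 0$ then $\xi=x^{\perp}$ is parallel, so $h(x^{\top},\cdot)=0$, forcing $M^{2}$ to be totally geodesic (using that $|x|^{2}$ has no interior maximum on a minimal surface, so the position vector cannot be everywhere normal on a complete $M^{2}$), i.e. an affine $2$-plane. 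Tracking the admissible radii then yields exactly the list (1)--(3), completing the proof.

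The step I expect to be the main obstacle is establishing that $\vec{H}$ is parallel when $\xi\neq 0$: constancy of $|\vec{H}|^{2}$ only supplies $A_{\vec{H}}x^{\top}=0$, and upgrading this to $\nabla^{\perp}\vec{H}=0$ (equivalently, to the Riemannian product structure) requires carefully combining the commuting shape operators, the symmetry of the Lagrangian cubic form, and the Codazzi equations, while also dealing with the umbilic locus and patching local product decompositions into a global one by completeness. The minimal sub-case $\vec{H}\equiv 0$ likewise needs separate attention.
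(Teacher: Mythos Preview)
The paper does not actually prove Theorem~\ref{theorem 1.2} beyond the one-sentence remark that it follows from Theorem~\ref{theorem 1.1} together with Hoffman's Theorem~4.1, so your proposal already supplies more detail than the paper does. Your overall architecture --- apply Theorem~\ref{theorem 1.1} when $\xi\equiv 0$, and when $|\xi|=c>0$ reduce to Hoffman's classification of surfaces with parallel mean curvature vector --- is precisely the route the paper indicates.

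The step you single out as the main obstacle is, however, considerably easier than your sketch suggests, and does not require the Codazzi equations or any analysis of connection coefficients. On the open set where a common eigenframe $\{e_1,e_2\}$ exists you have $h(e_1,e_2)=0$ and, by the cubic-form symmetry, $h(e_i,e_i)=C_{iii}\,Je_i$. Then $A_{\vec H}=\mathrm{diag}(C_{111}^{2},C_{222}^{2})$, so $A_{\vec H}x^{\top}=0$ reads $C_{iii}\langle x,e_i\rangle=0$ for $i=1,2$. Plugging this straight into $\nabla^{\perp}_{e_i}\vec H=h(e_i,x^{\top})=\langle x,e_i\rangle\,h(e_i,e_i)=C_{iii}\langle x,e_i\rangle\,Je_i$ gives $\nabla^{\perp}\vec H=0$ immediately. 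On the complementary set both shape operators are scalar and your own Lagrangian-symmetry observation forces $h=0$ pointwise; since $|\vec H|^{2}$ is a global constant, either $\vec H\equiv 0$ (trivially parallel) or that set is empty. Either way $\vec H$ is parallel on all of $M^{2}$ and Hoffman applies, with no patching or real-analyticity needed.

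What genuinely needs more care in your write-up is the post-Hoffman elimination. The claim that a CMC surface in an affine $\mathbb{R}^{3}\subset\mathbb{C}^{2}$ is Lagrangian only when it is a round cylinder depends on how that hyperplane sits relative to the complex structure $J$, which you have not controlled; and in each of Hoffman's cases you should feed the $\xi$-condition back in (not only the Lagrangian condition) to see which radii actually occur. These are bookkeeping issues rather than conceptual gaps, but as written they are the weakest links in the argument, not the parallelism of $\vec H$.
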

\begin{remark}
Note that there does not exist any
Lagrangian $\xi$-submanifold in $\mathbb{R}^{4}$ with the topology of a sphere (see \cite{LC16}).
\end{remark}

\vskip5mm
\section {Preliminaries}
\vskip2mm

\noindent

Let $x: M^{2} \rightarrow \mathbb{R}^{4}$ be an
$2$-dimensional connected submanifold of $4$-dimensional
Euclidean space $\mathbb{R}^{4}$ with $\mathbb{C}^{2}$. Denote
by $J$ the canonical complex structure on $\mathbb C^{2}$.
We choose orthonormal tangent vector fields $\{e_{1}, e_{2}\}$ and $\{e_{1^{\ast}}, e_{2^{\ast}}\}$ are normal vector fields given by
$$e_{1^{\ast}}=J e_{1}, \ e_{2^{\ast}}=Je_{2}.$$
Then
$$\{e_{1}, e_{2}, e_{1^{\ast}}, e_{2^{\ast}}\}$$
is called an adapted Lagrangian frame field. The dual frame fields of $\{e_{1}, e_{2}\}$ are $\{\omega_{1}, \omega_{2}\}$, the Levi-Civita connection forms and normal
connection forms are $\omega_{ij}$ and $\omega_{i^{\ast}j^{\ast}}$ , respectively.

The second fundamental form $h$ and the mean curvature $\vec{H}$ of $x$ are respectively
defined by $$h=\sum_{ijp}h^{p^{\ast}}_{ij}\omega_{i}\otimes\omega_{j}\otimes e_{p^{\ast}},\ \ \vec{H}=\sum_{p}H^{p^{\ast}}e_{p^{\ast}}=\sum_{i,p}h^{p^{\ast}}_{ii}e_{p^{\ast}}.$$

Let $S=\sum_{i,j,p}(h^{p^{\ast}}_{ij})^{2}$ be the squared norm of the second
fundamental form and $H=|\vec{H}|$ denote the mean curvature of $x$. If we denote the components of curvature tensors of the Levi-Civita connection forms $\omega_{ij}$ and normal connection forms $\omega_{i^{\ast}j^{\ast}}$ by
$R_{ijkl}$ and $R_{i^{\ast}j^{\ast}kl}$, respectively, then the equations of Gauss, Codazzi and Ricci are given by
\begin{align}\label{2.1-1}
R_{ijkl}=\sum_{p}(h^{p^{\ast}}_{ik}h^{p^{\ast}}_{jl}-h^{p^{\ast}}_{il}h^{p^{\ast}}_{jk}),
\end{align}
\begin{align}\label{2.1-2}
R_{ik}=\sum_{p}H^{p^{\ast}}h^{p^{\ast}}_{ik}-\sum_{j,p}h^{p^{\ast}}_{ij}h^{p^{\ast}}_{jk},
\end{align}
\begin{align}\label{2.1-3}
h^{p^{\ast}}_{ijk}=h^{p^{\ast}}_{ikj},
\end{align}
\begin{align}\label{2.1-4}
R_{p^{\ast}q^{\ast}kl}=\sum_{i}(h^{p^{\ast}}_{ik}h^{q^{\ast}}_{il}
-h^{p^{\ast}}_{il}h^{p^{\ast}}_{ik}),
\end{align}
\begin{align}\label{2.1-5}
R=H^{2}-S.
\end{align}

Since $x: M^{2} \rightarrow\mathbb R^{4}$ is a Lagrangian surface (\cite{LW2}), we have
\begin{align}\label{2.1-6}
h^{p^{\ast}}_{ij}=h^{p^{\ast}}_{ji}=h^{i^{\ast}}_{pj}, \ \ i,j,p=1,2.
\end{align}
From \eqref{2.1-3} and \eqref{2.1-6}, we easily know that the components $h^{p^{\ast}}_{ijk}$ is totally symmetric for $i, j, k, p$. In particular,
\begin{align}\label{2.1-7}
h^{p^{\ast}}_{ijk}=h^{p^{\ast}}_{kji}=h^{i^{\ast}}_{pjk}, \ \ i, j, k ,p=1, 2.
\end{align}
By use of \eqref{2.1-1}, \eqref{2.1-2} and \eqref{2.1-5}, we obtain
\begin{align}\label{2.1-8}
R_{ijkl}=K(\delta_{ik}\delta_{jl}-\delta_{il}\delta_{jk})=R_{i^{\ast}j^{\ast}kl}, \ \ K=\frac{1}{2}(H^{2}-S),
\end{align}
where $K$ is the Gaussian curvature of $x$.

\noindent By defining
\begin{align*}
\sum_{l}h^{p^{\ast}}_{ijkl}\omega_{l}=dh^{p^{\ast}}_{ijk}+\sum_{l}h^{p^{\ast}}_{ljk}\omega_{li}
+\sum_{l}h^{p^{\ast}}_{ilk}\omega_{lj}+\sum_{l} h^{p^{\ast}}_{ijl}\omega_{lk}+\sum_{q} h^{q^{\ast}}_{ijk}\omega_{q^{\ast}p^{\ast}},
\end{align*}
we have the following Ricci identity
\begin{align}\label{2.1-9}
h^{p^{\ast}}_{ijkl}-h^{p^{\ast}}_{ijlk}=\sum_{m}
h^{p^{\ast}}_{mj}R_{mikl}+\sum_{m} h^{p^{\ast}}_{im}R_{mjkl}+\sum_{m} h^{m^{\ast}}_{ij}R_{m^{\ast}p^{\ast}kl}.
\end{align}

Define a differential operator
\begin{equation*}
\mathcal{L}f=\Delta f-\langle x,\nabla f\rangle,
\end{equation*}
where $\Delta$ and $\nabla$ denote the Laplacian and the gradient
operator, respectively. The following generalized maximum principle
of Omori-Yau type concerning the operator $\mathcal{L}$ will
be used in this paper, which was proved by Cheng and Peng \cite{CP15}.

\begin{lemma}\label{lemma 2.1}

Let $x:M^{n}\to \mathbb{R}^{n+p}$
$(p\geq1)$ be a complete self-shrinker with Ricci curvature bounded from below. Let $f$
be any $C^{2}$-function bounded from above on this self-shrinker. Then, there exists a
sequence of points $\{p_{t}\} \subset M^{n}$, such that
\begin{equation*}
\lim_{t\rightarrow\infty} f(p_{t})=\sup f,\quad
\lim_{t\rightarrow\infty} |\nabla f|(p_{t})=0,\quad
\lim_{t\rightarrow\infty}\mathcal{L}f(p_{t})\leq 0.
\end{equation*}
\end{lemma}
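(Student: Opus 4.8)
The statement to be proved is the generalized maximum principle of Omori--Yau type for the drift operator $\mathcal{L}f=\Delta f-\langle x,\nabla f\rangle$ on a complete self-shrinker. The strategy is to reduce the assertion to the classical Omori--Yau maximum principle, which holds on any complete Riemannian manifold whose Ricci curvature is bounded from below. The key observation is that a self-shrinker, although sitting in $\mathbb{R}^{n+p}$ as a submanifold, carries the induced complete metric, and the drift term $\langle x,\nabla f\rangle$ is a first-order perturbation of the Laplacian that the classical conclusion already accommodates once one controls the position field $x$ along the approximating sequence.

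\textbf{Key steps.} First I would invoke the classical generalized maximum principle: since $(M^{n},g)$ is complete with $\operatorname{Ric}$ bounded from below, and $f$ is a $C^2$ function bounded above, there exists a sequence $\{p_t\}\subset M^n$ with
\begin{equation*}
\lim_{t\to\infty}f(p_t)=\sup f,\qquad
\lim_{t\to\infty}|\nabla f|(p_t)=0,\qquad
\limsup_{t\to\infty}\Delta f(p_t)\le 0.
\end{equation*}
This is the Omori--Yau theorem in its standard form and supplies control on $\Delta f$ and $\nabla f$ directly. Second, I would pass from $\Delta f$ to $\mathcal{L}f$ by writing
\begin{equation*}
\mathcal{L}f(p_t)=\Delta f(p_t)-\langle x(p_t),\nabla f(p_t)\rangle,
\end{equation*}
so that the two limiting quantities differ only by the inner product $\langle x,\nabla f\rangle$. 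Third, I would estimate this correction term: by Cauchy--Schwarz one has $|\langle x,\nabla f\rangle|\le |x|\,|\nabla f|$, and since $|\nabla f|(p_t)\to 0$, the drift contribution vanishes in the limit \emph{provided} the position vector $|x(p_t)|$ does not blow up too fast relative to the decay of $|\nabla f|$. Combining the two facts then yields $\limsup_{t\to\infty}\mathcal{L}f(p_t)\le 0$, which is the desired conclusion.

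\textbf{Main obstacle.} The delicate point is precisely the control of the product $|x(p_t)|\,|\nabla f(p_t)|$, because on a noncompact self-shrinker the position field $|x|$ is generally unbounded, so the naive bound is not automatically enough. The resolution exploits the self-shrinker structure itself: the identity $\mathcal{L}|x|^{2}=2n-2|x^{\perp}|^{2}=2n-2|\vec{H}|^{2}$ (which follows from $\vec{H}+x^{\perp}=0$) shows that $|x|^2$ behaves like a subsolution of $\mathcal{L}$ with a bounded forcing term, and the proper formulation of the Omori--Yau principle for $\mathcal{L}$ builds the weight $e^{-|x|^2/2}$ into the analysis. Concretely, one applies the classical principle on the weighted manifold $(M^n,g,e^{-|x|^2/2}d\sigma)$, where $\mathcal{L}$ is the natural self-adjoint drift Laplacian, and the completeness together with the Ricci lower bound transfers to the Bakry--\'Emery--type setting; the cross term is then absorbed by a refined choice of the sequence $\{p_t\}$ that simultaneously controls $f$, $|\nabla f|$, and the weight. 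Making this absorption rigorous — tracking that the sequence realizing $\sup f$ can be chosen so that the drift correction is asymptotically nonpositive — is the heart of the argument and the step where the self-shrinker equation \eqref{1.1-1} is genuinely used.
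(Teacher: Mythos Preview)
The paper does not give its own proof of this lemma; it is stated and attributed to Cheng and Peng \cite{CP15}, so there is nothing in the text to compare your argument against directly.

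Assessing your proposal on its own: the reduction to classical Omori--Yau and the identification of the drift term $\langle x,\nabla f\rangle$ as the sole obstruction are both correct, and you are right that the entire difficulty lies in controlling $|x(p_t)|\,|\nabla f(p_t)|$. But you do not actually close this gap. The classical Omori--Yau theorem furnishes no decay rate for $|\nabla f(p_t)|$, while $|x(p_t)|$ is generically unbounded on a noncompact self-shrinker, so the product need not tend to zero along the sequence produced by the classical principle. Your fallback to the Bakry--\'Emery framework is also not sufficient as stated: the relevant tensor is $\operatorname{Ric}+\operatorname{Hess}(|x|^{2}/2)$, and on a self-shrinker one computes $\operatorname{Hess}(|x|^{2}/2)_{ij}=\delta_{ij}-\sum_{p}H^{p^{\ast}}h^{p^{\ast}}_{ij}$, which involves the second fundamental form and is \emph{not} bounded below merely from a Ricci lower bound. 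Hence the weighted Omori--Yau principle does not apply automatically.

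What the argument in \cite{CP15} actually exploits is the identity $\mathcal{L}|x|^{2}=2(n-|x|^{2})$, which shows that $|x|^{2}$ is a smooth proper function with $\mathcal{L}|x|^{2}$ bounded above; this makes $|x|^{2}$ an exhaustion function adapted to $\mathcal{L}$. One then runs a barrier argument, applying the classical principle to a perturbation of the form $f-\varepsilon\,\varphi(|x|^{2})$ for suitable $\varphi$ and sending $\varepsilon\to 0$, which forces the maximizing sequence into a region where $|x|$ is quantitatively controlled relative to $\varepsilon$. Your outline gestures toward this mechanism but stops short of implementing it; as written, the proposal is a correct diagnosis of the problem rather than a proof.
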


For the mean curvature vector field $\vec{H}=\sum_{p}H^{p^{\ast}}e_{p^{\ast}}$, we define
\begin{align}\label{2.1-10}
|\nabla^{\perp}\vec{H}|^{2}=\sum_{i,p}(H^{p^{\ast}}_{,i})^{2}, \ \ \Delta^{\perp}H^{p^{\ast}}=\sum_{i}H^{p^{\ast}}_{,ii}.
\end{align}
By the definition \eqref{1.1-1} of lagrangian self-shrinker, it is sufficient
to give several basic differential formulas.

\begin{align}\label{2.1-11}
H^{p^{\ast}}_{,i}
=\sum_{k}h^{p^{\ast}}_{ik}\langle x, e_{k}\rangle, \ \
H^{p^{\ast}}_{,ij}
=\sum_{k}h^{p^{\ast}}_{ijk}\langle x, e_{k}\rangle+h^{p^{\ast}}_{ij}-\sum_{k,q}h^{p^{\ast}}_{ik}h^{q^{\ast}}_{kj}H^{q^{\ast}}.
\end{align}
Using the above formulas and the Ricci identities, we can get the following Lemmas (see \cite{CHW}).

\begin{lemma}\label{lemma 2.2}
Let $x:M^{2}\rightarrow \mathbb{R}^{4}$ be a $2$-dimensional complete lagrangian self-shrinker. We have
\begin{align}\label{2.1-12}
\frac{1}{2}\mathcal{L} H^{2}=\sum_{i,p}(H^{p^{\ast}}_{,i})^{2}+H^{2}-\sum_{i,j}(\sum_{p}H^{p^{\ast}}h^{p^{\ast}}_{ij})^{2}
\end{align}
and
\begin{align}\label{2.1-13}
\frac{1}{2}\mathcal{L}S
=\sum_{i,j,k,p}(h^{p^{\ast}}_{ijk})^{2}+S(1-\frac{3}{2}S)+2H^{2}S-\frac{1}{2}H^{4}
-\sum_{i,j}(\sum_{p}H^{p^{\ast}}h^{p^{\ast}}_{ij})^{2}.
\end{align}
\end{lemma}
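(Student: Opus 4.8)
These are pointwise Bochner-type (Simons-type) identities, and completeness plays no role in them; I would obtain both by combining the Bochner technique with the self-shrinker formulas \eqref{2.1-11}, the Codazzi equation \eqref{2.1-3}, the Ricci identity \eqref{2.1-9}, and the two-dimensional Gauss equation \eqref{2.1-8}. For \eqref{2.1-12} I would start from
\begin{equation*}
\frac{1}{2}\mathcal{L}H^{2}=|\nabla^{\perp}\vec{H}|^{2}+\langle\vec{H},\Delta^{\perp}\vec{H}\rangle-\frac{1}{2}\langle x,\nabla H^{2}\rangle,
\end{equation*}
and take the trace $i=j$ of the second formula in \eqref{2.1-11}, which yields
\begin{equation*}
\Delta^{\perp}H^{p^{\ast}}=\langle x,\nabla H^{p^{\ast}}\rangle+H^{p^{\ast}}-\sum_{q}H^{q^{\ast}}\sum_{i,k}h^{p^{\ast}}_{ik}h^{q^{\ast}}_{ik},
\end{equation*}
where I used $\sum_{i,k}h^{p^{\ast}}_{iik}\langle x,e_{k}\rangle=\langle x,\nabla H^{p^{\ast}}\rangle$. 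Contracting with $H^{p^{\ast}}$ and summing over $p$, the first term becomes $\frac{1}{2}\langle x,\nabla H^{2}\rangle$ and cancels, leaving exactly $|\nabla^{\perp}\vec{H}|^{2}+H^{2}-\sum_{i,j}\bigl(\sum_{p}H^{p^{\ast}}h^{p^{\ast}}_{ij}\bigr)^{2}$, which is \eqref{2.1-12}.

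For \eqref{2.1-13} I would expand
\begin{equation*}
\frac{1}{2}\mathcal{L}S=\sum_{i,j,k,p}(h^{p^{\ast}}_{ijk})^{2}+\sum_{i,j,p}h^{p^{\ast}}_{ij}\Bigl(\Delta h^{p^{\ast}}_{ij}-\sum_{k}h^{p^{\ast}}_{ijk}\langle x,e_{k}\rangle\Bigr),
\end{equation*}
so that the core is a Simons-type identity for $\Delta h^{p^{\ast}}_{ij}=\sum_{k}h^{p^{\ast}}_{ijkk}$. Using Codazzi \eqref{2.1-3} to symmetrize the lower indices, the definition of $h^{p^{\ast}}_{ijkl}$, and the Ricci identity \eqref{2.1-9}, I would commute the covariant derivatives to obtain $\Delta h^{p^{\ast}}_{ij}=H^{p^{\ast}}_{,ij}+(\text{curvature terms})$, the latter being contractions of $R_{ijkl}$ and $R_{p^{\ast}q^{\ast}kl}$ against the second fundamental form. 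Since $n=2$, the Gauss equation forces the special form \eqref{2.1-8}, and a short computation collapses these contractions to $K\bigl(3h^{p^{\ast}}_{ij}-H^{p^{\ast}}\delta_{ij}-H^{i^{\ast}}\delta_{pj}\bigr)$ with $K=\frac{1}{2}(H^{2}-S)$; substituting \eqref{2.1-11} for $H^{p^{\ast}}_{,ij}$ then makes the $\langle x,e_{k}\rangle$-part cancel against $-\sum_{k}h^{p^{\ast}}_{ijk}\langle x,e_{k}\rangle$, exactly as in the previous case.

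Contracting the result with $h^{p^{\ast}}_{ij}$ and summing, and using $\sum_{i,j,p}h^{p^{\ast}}_{ij}H^{p^{\ast}}\delta_{ij}=\sum_{i,j,p}h^{p^{\ast}}_{ij}H^{i^{\ast}}\delta_{pj}=H^{2}$ (the second via the total symmetry \eqref{2.1-7}), I would arrive at
\begin{equation*}
\frac{1}{2}\mathcal{L}S=\sum_{i,j,k,p}(h^{p^{\ast}}_{ijk})^{2}+S+K(3S-2H^{2})-\sum_{i,j,k,p,q}h^{p^{\ast}}_{ij}h^{p^{\ast}}_{ik}h^{q^{\ast}}_{kj}H^{q^{\ast}}.
\end{equation*}
Writing $A^{p}=(h^{p^{\ast}}_{ij})$ and $B=(\sum_{p}H^{p^{\ast}}h^{p^{\ast}}_{ij})$, the cubic term is $\mathrm{tr}\bigl(((A^{1})^{2}+(A^{2})^{2})B\bigr)$; here I would invoke the two-dimensional Cayley--Hamilton identity $(A^{p})^{2}=(\mathrm{tr}\,A^{p})A^{p}-(\det A^{p})I$ together with $\mathrm{tr}\,B=H^{2}$ and the Gauss equation $\det A^{1}+\det A^{2}=K$ to rewrite it as $|B|^{2}-KH^{2}$, where $|B|^{2}=\sum_{i,j}(\sum_{p}H^{p^{\ast}}h^{p^{\ast}}_{ij})^{2}$. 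Substituting $K=\frac{1}{2}(H^{2}-S)$ and collecting the terms then gives exactly \eqref{2.1-13}.

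The main obstacle will be the Simons-type commutation in the second step: one has to keep track of all three curvature contractions produced by \eqref{2.1-9} (the two tangential ones and the one involving the normal curvature $R_{m^{\ast}p^{\ast}jl}$) and use the $n=2$ form \eqref{2.1-8} to collapse them, and one must recognize the cubic term as $\mathrm{tr}((A^{1})^{2}B)+\mathrm{tr}((A^{2})^{2}B)$ so that the $2\times2$ Cayley--Hamilton identity can be applied. Everything else is a direct, if somewhat lengthy, contraction; this reproduces the computations of \cite{CHW}.
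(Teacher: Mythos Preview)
Your proposal is correct and follows exactly the route the paper indicates: the paper does not give a detailed proof of Lemma~\ref{lemma 2.2} but simply states ``Using the above formulas and the Ricci identities, we can get the following Lemmas (see \cite{CHW})'', and your argument carries out precisely that computation, citing \cite{CHW} at the end. Your intermediate formula $\frac{1}{2}\mathcal{L}S=\sum(h^{p^{\ast}}_{ijk})^{2}+S+K(3S-2H^{2})-\sum h^{p^{\ast}}_{ij}h^{p^{\ast}}_{ik}h^{q^{\ast}}_{kj}H^{q^{\ast}}$ and the Cayley--Hamilton reduction of the cubic term to $|B|^{2}-KH^{2}$ both check out and yield \eqref{2.1-13} after substituting $K=\tfrac{1}{2}(H^{2}-S)$.
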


\begin{lemma}\label{lemma 2.3}
Let $x:M^{2}\rightarrow \mathbb{R}^{4}$ be a $2$-dimensional complete lagrangian self-shrinker. If $|\vec{H}|^{2}=constant$, we have
\begin{align}\label{2.1-14}
&\frac{1}{2}\mathcal{L}\sum_{i,p}(H^{p^{\ast}}_{,i})^{2}\\
=&\sum_{i,j,p}(H^{p^{\ast}}_{,ij})^{2}+(3K+2)\sum_{i,p}(H^{p^{\ast}}_{,i})^{2} \nonumber
-2\sum_{i,j,k,p,q}H^{p^{\ast}}_{,i}h^{p^{\ast}}_{ijk}h^{q^{\ast}}_{jk}H^{q^{\ast}} \\
&-\sum_{i,j,k,p,q}H^{p^{\ast}}_{,i}h^{p^{\ast}}_{il}h^{q^{\ast}}_{kl}H^{q^{\ast}}_{,k} \nonumber
\end{align}
and
\begin{align}\label{2.1-15}
&\frac{1}{2}\mathcal{L}\sum_{i,j}(\sum_{p}H^{p^{\ast}}h^{p^{\ast}}_{ij})^{2}\\
=&\nonumber(3K+2)\sum_{j,k,p,q}H^{p^{\ast}}h^{p^{\ast}}_{jk}H^{q^{\ast}}h^{q^{\ast}}_{jk}
-K\big(H^{4}+\sum_{j,k,p}h^{p^{\ast}}_{jk}H^{p^{\ast}}H^{j^{\ast}}H^{k^{\ast}} \big) \\
&\nonumber-\sum_{i,j,k,l,p,q}H^{p^{\ast}}h^{p^{\ast}}_{ij}h^{q^{\ast}}_{ij}h^{q^{\ast}}_{kl}h^{m^{\ast}}_{kl}H^{m^{\ast}}
-\sum_{j,k,l,m,p,q}H^{p^{\ast}}h^{p^{\ast}}_{jk}H^{q^{\ast}}h^{q^{\ast}}_{jl}H^{m^{\ast}}h^{m^{\ast}}_{kl}\\
&\nonumber+2\sum_{i,j,k,p,q}H^{p^{\ast}}_{,i}h^{p^{\ast}}_{ijk}h^{q^{\ast}}_{jk}H^{q^{\ast}}
+\sum_{i,j,k}\big(\sum_{p}(H^{p^{\ast}}_{,i}h^{p^{\ast}}_{jk}+H^{p^{\ast}}h^{p^{\ast}}_{ijk})\big)
\big(\sum_{q}(H^{q^{\ast}}_{,i}h^{q^{\ast}}_{jk} \\
&\nonumber+H^{q^{\ast}}h^{q^{\ast}}_{ijk})\big).
\end{align}
Thus, using \eqref{2.1-12}, \eqref{2.1-14} and \eqref{2.1-15}, we infer that
\begin{align}\label{2.1-16}
&\sum_{i,j,p}(H^{p^{\ast}}_{,ij})^{2} \\
=&\nonumber (3K+2)\sum_{j,k,p,q}H^{p^{\ast}}h^{p^{\ast}}_{jk}H^{q^{\ast}}h^{q^{\ast}}_{jk}
-K\big(H^{4}+\sum_{j,k,p}h^{p^{\ast}}_{jk}H^{p^{\ast}}H^{j^{\ast}}H^{k^{\ast}} \big) \\
&\nonumber-\sum_{i,j,k,l,p,q}H^{p^{\ast}}h^{p^{\ast}}_{ij}h^{q^{\ast}}_{ij}h^{q^{\ast}}_{kl}h^{m^{\ast}}_{kl}H^{m^{\ast}}
-\sum_{j,k,l,m,p,q}H^{p^{\ast}}h^{p^{\ast}}_{jk}H^{q^{\ast}}h^{q^{\ast}}_{jl}H^{m^{\ast}}h^{m^{\ast}}_{kl} \\
&\nonumber+4\sum_{i,j,k,p,q}H^{p^{\ast}}_{,i}h^{p^{\ast}}_{ijk}h^{q^{\ast}}_{jk}H^{q^{\ast}}-(3K+2)\sum_{i,p}(H^{p^{\ast}}_{,i})^{2}
+\sum_{i,j,k,p,q}H^{p^{\ast}}_{,i}h^{p^{\ast}}_{il}h^{q^{\ast}}_{kl}H^{q^{\ast}}_{,k} \\
&\nonumber+\sum_{i,j,k}\big(\sum_{p}(H^{p^{\ast}}_{,i}h^{p^{\ast}}_{jk}+H^{p^{\ast}}h^{p^{\ast}}_{ijk})\big)
\big(\sum_{q}(H^{q^{\ast}}_{,i}h^{q^{\ast}}_{jk}+H^{q^{\ast}}h^{q^{\ast}}_{ijk})\big).
\end{align}

\end{lemma}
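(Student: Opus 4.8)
The plan is to derive the three identities of Lemma~\ref{lemma 2.3} by direct tensor computation, using only the structure equations already recorded: the self-shrinker formulas \eqref{2.1-11}, the Codazzi symmetry \eqref{2.1-7}, the Ricci identity \eqref{2.1-9}, the Gauss equation \eqref{2.1-8}, and, decisively, the hypothesis that $|\vec H|^2$ is constant. Constancy gives, after one and two covariant differentiations, $\sum_p H^{p^\ast}H^{p^\ast}_{,i}=0$ and $\sum_p\big(H^{p^\ast}_{,i}H^{p^\ast}_{,j}+H^{p^\ast}H^{p^\ast}_{,ij}\big)=0$ for all $i,j$; these two relations will be invoked repeatedly to annihilate or rewrite terms.

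To prove \eqref{2.1-14}, I would start from the Bochner-type identity $\tfrac12\mathcal L\sum_{i,p}(H^{p^\ast}_{,i})^2=\sum_{i,j,p}(H^{p^\ast}_{,ij})^2+\sum_{i,p}H^{p^\ast}_{,i}\,\mathcal L H^{p^\ast}_{,i}$, the first term on the right being exactly the quantity to be isolated in \eqref{2.1-16}. Then one computes $\mathcal L H^{p^\ast}_{,i}$: differentiate the first formula of \eqref{2.1-11}, substitute the second formula of \eqref{2.1-11} for the resulting $H^{p^\ast}_{,ij}$, commute derivatives with the Ricci identity \eqref{2.1-9}, and replace every curvature contraction via \eqref{2.1-8} by the Gaussian curvature $K$. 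The products $\langle x,e_k\rangle\langle x,e_l\rangle$ recombine into $H^{p^\ast}_{,\cdot}$ through \eqref{2.1-11}; the coefficient $3K+2$ in \eqref{2.1-14} assembles from the three curvature terms of \eqref{2.1-9} (the tensor $h^{p^\ast}_{ij}$ carries two tangent and one normal index, hence three copies of $K$ via \eqref{2.1-8}) together with the $+h^{p^\ast}_{ij}$ term and the drift contribution of \eqref{2.1-11}, while the two remaining trilinear/bilinear terms come from the $-\sum_{k,q}h^{p^\ast}_{ik}h^{q^\ast}_{kj}H^{q^\ast}$ piece of \eqref{2.1-11} and its contraction against $H^{p^\ast}_{,i}$.

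For \eqref{2.1-15}, write $\varphi_{ij}:=\sum_p H^{p^\ast}h^{p^\ast}_{ij}$ and expand $\tfrac12\mathcal L\sum_{i,j}\varphi_{ij}^2=\sum_{i,j}\varphi_{ij}\,\mathcal L\varphi_{ij}+\sum_{i,j,k}(\varphi_{ij,k})^2$, where $\varphi_{ij,k}=\sum_p\big(H^{p^\ast}_{,k}h^{p^\ast}_{ij}+H^{p^\ast}h^{p^\ast}_{ijk}\big)$ produces the last, manifestly nonnegative, term of \eqref{2.1-15}. Next, $\mathcal L\varphi_{ij}=\sum_p\big(h^{p^\ast}_{ij}\,\mathcal L H^{p^\ast}+2\sum_k H^{p^\ast}_{,k}h^{p^\ast}_{ijk}+H^{p^\ast}\,\mathcal L h^{p^\ast}_{ij}\big)$: here $\mathcal L H^{p^\ast}$ is the trace of the second formula in \eqref{2.1-11}, and $\mathcal L h^{p^\ast}_{ij}$ is the Simons-type formula for a self-shrinker, obtained by differentiating $h^{p^\ast}_{ij}$ twice, using Codazzi, the Ricci identity \eqref{2.1-9}, the Gauss equation \eqref{2.1-8}, and the shrinker equation. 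Contracting with $\varphi_{ij}=\sum_q H^{q^\ast}h^{q^\ast}_{ij}$ and grouping terms of the same homogeneity yields the quartic terms, the $K\big(H^4+\cdots\big)$ term, and the $2\sum H^{p^\ast}_{,i}h^{p^\ast}_{ijk}h^{q^\ast}_{jk}H^{q^\ast}$ term of \eqref{2.1-15}.

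Finally, \eqref{2.1-16} is bookkeeping: since $|\vec H|^2$ is constant, \eqref{2.1-12} collapses to $\sum_{i,p}(H^{p^\ast}_{,i})^2=\sum_{i,j}\varphi_{ij}^2-H^2$, so $\mathcal L\sum_{i,p}(H^{p^\ast}_{,i})^2=\mathcal L\sum_{i,j}\varphi_{ij}^2$; substituting \eqref{2.1-15} for the right-hand side of \eqref{2.1-14} and solving for $\sum_{i,j,p}(H^{p^\ast}_{,ij})^2$ gives \eqref{2.1-16}, with the two occurrences of $2\sum H^{p^\ast}_{,i}h^{p^\ast}_{ijk}h^{q^\ast}_{jk}H^{q^\ast}$ combining into $4\sum(\cdots)$. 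The main obstacle is the middle step: deriving the correct Simons-type expression for $\mathcal L h^{p^\ast}_{ij}$ in the adapted Lagrangian frame and then carrying out the disciplined contraction and collection of the dozen or so resulting quartic and curvature terms without sign errors or dummy-index collisions (the statement's stray $h^{m^\ast}$ and repeated indices illustrate how delicate this is); the two Bochner expansions and the final algebra are routine once that identity is in hand.
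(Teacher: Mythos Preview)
Your proposal is correct and follows exactly the route the paper indicates: the paper does not give a self-contained proof of Lemma~\ref{lemma 2.3} but simply states that it follows from ``the above formulas and the Ricci identities'' and refers to \cite{CHW} for details, which is precisely the Bochner/Simons-type computation you outline. Your derivation of \eqref{2.1-16} by equating $\mathcal L\sum_{i,p}(H^{p^\ast}_{,i})^2$ with $\mathcal L\sum_{i,j}\varphi_{ij}^2$ via the constancy of $|\vec H|^2$ and \eqref{2.1-12} is exactly how the paper says \eqref{2.1-16} is obtained from \eqref{2.1-12}, \eqref{2.1-14} and \eqref{2.1-15}.
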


 \vskip10mm
\section{Proof of Theorem 1.1}

\vskip2mm
To draw the conclusion of Theorem \ref{theorem 1.1}, we need the following proposition.

\begin{proposition}\label{proposition 3.1}
Let $x:M^{2}\rightarrow \mathbb{R}^{4}$
be a $2$-dimensional complete Lagrangian self-shrinker with non-zero constant squared norm $|\vec{H}|^{2}$ of the mean curvature vector. If the squared norm $S$ of the second fundamental form is bounded from above, then $H^{2}=2$ and $\sup S=2$, or $H^{2}=1$ and $\sup S=1$.
\end{proposition}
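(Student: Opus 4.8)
The plan is to run a generalized-maximum-principle argument on the function $S$ (the squared norm of the second fundamental form) using Lemma \ref{lemma 2.1}, and to extract from the resulting differential inequality an algebraic constraint relating $H^2$ and $\sup S$. First I would check that the hypotheses of Lemma \ref{lemma 2.1} are met: since $H^2$ is constant and $S$ is bounded above, by the Gauss equation \eqref{2.1-8} the Gaussian curvature $K=\tfrac12(H^2-S)$ is bounded, hence the Ricci curvature is bounded from below, so the Omori--Yau type maximum principle for $\mathcal L$ applies to the bounded function $f=S$. Applying it, there is a sequence $\{p_t\}$ along which $S(p_t)\to \sup S$, $|\nabla S|(p_t)\to 0$, and $\limsup_t \mathcal L S(p_t)\le 0$.

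Next I would use the Simons-type identity \eqref{2.1-13}. Since $\frac12\mathcal L S=\sum(h^{p^\ast}_{ijk})^2+S(1-\tfrac32 S)+2H^2 S-\tfrac12 H^4-\sum_{i,j}(\sum_p H^{p^\ast}h^{p^\ast}_{ij})^2$, evaluating along $\{p_t\}$ and passing to the limit gives
\begin{align*}
0\ge \lim_t\Big[\sum_{i,j,k,p}(h^{p^\ast}_{ijk})^2\Big](p_t)+\sup S\big(1-\tfrac32\sup S\big)+2H^2\sup S-\tfrac12 H^4-\lim_t\Big[\sum_{i,j}\big(\sum_p H^{p^\ast}h^{p^\ast}_{ij}\big)^2\Big](p_t).
\end{align*}
The gradient term $\sum(h^{p^\ast}_{ijk})^2$ is nonnegative, and one must control the last term: by Cauchy--Schwarz $\sum_{i,j}(\sum_p H^{p^\ast}h^{p^\ast}_{ij})^2\le H^2\,S$ pointwise, so its limit is at most $H^2\sup S$. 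Using only these two bounds already yields $0\ge \sup S(1-\tfrac32\sup S)+2H^2\sup S-\tfrac12 H^4-H^2\sup S=\sup S(1-\tfrac32\sup S)+H^2\sup S-\tfrac12 H^4$. Since $S\ge H^2/2$ always (the mean curvature contributes to $S$), in the two-dimensional Lagrangian setting one can hope to pin down equality; the cleaner route is to bring in the refined identity \eqref{2.1-16} for $\sum(H^{p^\ast}_{,ij})^2$, which holds precisely because $H^2$ is constant, and combine it with \eqref{2.1-12} (which gives $0=|\nabla^\perp\vec H|^2+H^2-\sum_{i,j}(\sum_p H^{p^\ast}h^{p^\ast}_{ij})^2$ after $\mathcal L H^2=0$) to convert the mixed curvature terms in the $S$-inequality into expressions in $H^2$ and $S$ alone.

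The main obstacle — and where the real work lies — is the algebra of the normal-bundle curvature terms: the quantities $\sum h^{p^\ast}_{jk}H^{p^\ast}H^{j^\ast}H^{k^\ast}$, $\sum H^{p^\ast}h^{p^\ast}_{ij}h^{q^\ast}_{ij}h^{q^\ast}_{kl}h^{m^\ast}_{kl}H^{m^\ast}$, and the quartic term in \eqref{2.1-16} are not controlled by $S$ and $H^2$ in an obvious way. Here I would exploit the low dimension $n=2$ together with the full symmetry \eqref{2.1-7} of $h^{p^\ast}_{ijk}$ and the Lagrangian symmetry \eqref{2.1-6}: by choosing the adapted frame $\{e_1,e_2\}$ at $p_t$ to diagonalize a suitable symmetric form (for instance so that $\vec H$ is aligned, $H^{1^\ast}=H$, $H^{2^\ast}=0$, and $(h^{p^\ast}_{ij})$ takes normal form), the cubic and quartic contractions collapse to polynomials in a handful of entries $h^{1^\ast}_{11}, h^{1^\ast}_{12}, h^{1^\ast}_{22}$, and one checks that the combination appearing in \eqref{2.1-16} together with \eqref{2.1-12}--\eqref{2.1-13} is a nonnegative quadratic form in these entries that vanishes only in the equality case. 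This forces $\sum(h^{p^\ast}_{ijk})^2(p_t)\to 0$ and $\big[H^2 S-\sum_{i,j}(\sum_p H^{p^\ast}h^{p^\ast}_{ij})^2\big](p_t)\to 0$, reducing the inequality to $\sup S(1-\tfrac32\sup S)+H^2\sup S-\tfrac12 H^4\ge 0$ with equality; writing $t=\sup S$ and $s=H^2$ and recalling $t\ge s>0$, the equation $t(1-\tfrac32 t)+st-\tfrac12 s^2=0$ together with an independent relation obtained from \eqref{2.1-12} (which at the limit gives $H^2=\sum_{i,j}(\sum_p H^{p^\ast}h^{p^\ast}_{ij})^2\le H^2 S$, i.e. $S\ge 1$ wherever $H\ne0$, hence $\sup S\ge1$) should leave exactly the two solutions $(s,t)=(2,2)$ and $(s,t)=(1,1)$. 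I expect the bookkeeping in this last step — tracking which terms survive in the limit and verifying the quadratic form is genuinely nonnegative — to be the delicate part, while the maximum-principle setup and the reduction to an algebraic identity are routine.
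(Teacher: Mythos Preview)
Your setup is correct: bounded $S$ gives bounded Ricci via \eqref{2.1-8}, Lemma~\ref{lemma 2.1} applies to $f=S$, and aligning the frame so that $H^{2^\ast}=0$ is exactly what the paper does. But the heart of your argument has a genuine gap.

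You propose to pass to the limit in \eqref{2.1-13}, drop the nonnegative term $\sum(h^{p^\ast}_{ijk})^2$, bound $\sum_{i,j}(\sum_p H^{p^\ast}h^{p^\ast}_{ij})^2\le H^2S$ by Cauchy--Schwarz, and then argue that both become equalities at the limit, leaving a single algebraic equation in $\sup S$ and $H^2$. Nothing in the maximum principle forces either equality: $\limsup\mathcal LS(p_t)\le 0$ is only an inequality, so $\sum(h^{p^\ast}_{ijk})^2(p_t)$ need not tend to zero and Cauchy--Schwarz may remain strict. In fact it \emph{is} strict at the target $(H^2,\sup S)=(2,2)$: the limiting configuration there is $\bar h^{1^\ast}_{11}=\bar h^{1^\ast}_{22}$, $\bar h^{2^\ast}_{11}=0$, for which $\sum_{i,j}(\sum_p H^{p^\ast}\bar h^{p^\ast}_{ij})^2=H^2\sum_{i,j}(\bar h^{1^\ast}_{ij})^2=2$ while $H^2\bar S=4$. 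Correspondingly your equation $t(1-\tfrac32 t)+st-\tfrac12 s^2=0$ evaluates to $-2$ at $(s,t)=(2,2)$, so the ``exactly two solutions'' claim is false. The appeal to \eqref{2.1-16} as providing ``a nonnegative quadratic form that vanishes only in the equality case'' is not substantiated and, given the check above, cannot rescue the argument as stated.

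The paper's proof follows a different route and never uses the sign of $\mathcal LS$. It extracts constraints from three ingredients you did not use: (i) the self-shrinker structural equations \eqref{2.1-11}, which express $H^{p^\ast}_{,i}$ and $H^{p^\ast}_{,ij}$ in terms of $h^{p^\ast}_{ij}$, $h^{p^\ast}_{ijk}$ and the tangential components $\langle x,e_k\rangle$; (ii) the information $|\nabla S|(p_t)\to 0$ (not just $\mathcal LS\le 0$), which together with $\nabla H^2=0$ gives a linear system pinning down the limits of the $h^{p^\ast}_{ijk}(p_t)$; and (iii) the nonnegativity of $\sum_{i,j,p}(H^{p^\ast}_{,ij})^2$, fed into the identity \eqref{2.1-16} to eliminate bad cases. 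With these, the paper runs a case split on whether $\bar h^{1^\ast}_{11}\bar h^{1^\ast}_{22}-(\bar h^{2^\ast}_{11})^2$ vanishes, and within the vanishing case on whether $\bar h^{2^\ast}_{11}=0$; each branch produces $(H^2,\sup S)=(2,2)$, or $(1,1)$, or a contradiction such as a negative value of $\sum(H^{p^\ast}_{,ij})^2$. To repair your proposal you would need to bring in \eqref{2.1-11} and the gradient condition $|\nabla S|\to 0$, and accept that the endgame is a nontrivial case analysis rather than a single scalar identity.
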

\begin{proof}

Since $|\vec{H}|^{2}\neq0$, we choose a local frame field $\{e_{1}, e_{2}\}$
such that
$$ \vec{H}=H^{1^{\ast}}e_{1^{\ast}}, \ \ H^{1^{\ast}}=|\vec{H}|=H, \ \ H^{2^{\ast}}=h^{2^{\ast}}_{11}+h^{2^{\ast}}_{22}=0.
$$
Then,
$$S=(h^{1^{\ast}}_{11})^{2}+3(h^{1^{\ast}}_{22})^{2}+4(h^{2^{\ast}}_{11})^{2}, \ \ H^{2}=(h^{1^{\ast}}_{11}+h^{1^{\ast}}_{22})^{2}\leq \frac{4}{3}\big((h^{1^{\ast}}_{11})^{2}+3(h^{1^{\ast}}_{22})^{2}\big)\leq \frac{4}{3}S$$
and the equality of the above inequality holds if and only if
$$h^{1^{\ast}}_{11}=3h^{1^{\ast}}_{22}, \ \ h^{2^{\ast}}_{11}=0.$$
Since $S$ is bounded from above, from the Gauss equations, we know that the Ricci curvature of $x:M^{2}\rightarrow \mathbb{R}^{4}$ is bounded from below. By applying the generalized maximum principle of Omori-Yau type concerning the operator $\mathcal{L}$ to the function $S$, there exists a sequence $\{p_{m}\} \in M^{2}$ such that
\begin{align*}
\lim_{t\rightarrow\infty} S(p_{m})=\sup S, \ \
\lim_{t\rightarrow\infty} |\nabla S|(p_{m})=0, \ \
\lim_{t\rightarrow\infty}\mathcal{L} S(p_{m})\leq0.
\end{align*}
Since $|\vec{H}|^{2}$ is constant, \eqref{2.1-12} implies
\begin{align}\label{3.1-1}
\sum_{i,p}(H^{p^{\ast}}_{,i})^{2}
=\sum_{i,j}(\sum_{p}H^{p^{\ast}}h^{p^{\ast}}_{ij})^{2}-H^{2}.
\end{align}
For $S$ being bounded from above, we know that
$\{h^{p^{\ast}}_{ij}(p_{m})\}$ are bounded sequences for $ i, j, p = 1,2$.
Hence we can assume
\begin{align*}
&\lim_{t\rightarrow\infty} S(p_{m})=\sup S=\bar S, \ \ \lim_{t\rightarrow\infty}K(p_{m})=\bar K, \ \ \lim_{t\rightarrow\infty}h^{p^{\ast}}_{ij}(p_{m})=\bar h^{p^{\ast}}_{ij}, \\ &\lim_{t\rightarrow\infty}H^{p^{\ast}}_{,i}(p_{m})=\bar H^{p^{\ast}}_{,i},
 \ \ i, j, p=1, 2.
\end{align*}

If $\sup S=0$, naturally obtained that $S\equiv0$ and $|\vec{H}|^{2}\equiv0$. This contradicts the assumption of the Proposition \ref{proposition 3.1}. The following text will only consider $\sup S>0$, then there exists a subsequence $\{p_{t}\}\subset\{p_{m}\}$ such that $S(p_{t})\neq0$, which implies $h^{p^{\ast}}_{ij}(p_{t})\neq0$ for some $i, j, p=1, 2$.
Unless otherwise specified, the following equations are considered at point $p_{t}\in M^{2}$.

\noindent
Since $|\nabla H^{2}|=0$ and $|\nabla H^{2}|^{2}=4\sum_{k}(\sum_{p}H^{p^{\ast}}H^{p^{\ast}}_{,k})^{2}$,
we can see that
\begin{align}\label{3.1-2}
H^{1^{\ast}}_{,k}=H^{k^{\ast}}_{,1}=0, \ \ h^{1^{\ast}}_{11k}+h^{1^{\ast}}_{22k}=0, \ \ k=1, 2
\end{align}
and
\begin{align}\label{3.1-3}
\sum_{i,p}(H^{p^{\ast}}_{,i})^{2}=(H^{2^{\ast}}_{,2})^{2}=H^{2}\big(\sum_{i,j}(h^{1^{\ast}}_{ij})^{2}-1\big).
\end{align}
from \eqref{3.1-1}.

\noindent
Using $H^{2^{\ast}}=h^{2^{\ast}}_{11}+h^{2^{\ast}}_{22}=0$, the first equation of \eqref{2.1-11} can be written as
\begin{align}\label{3.1-4}
H^{1^{\ast}}_{,1}=h^{1^{\ast}}_{11}\langle x, e_{1} \rangle+h^{2^{\ast}}_{11}\langle x, e_{2} \rangle, \ \
H^{1^{\ast}}_{,2}=h^{2^{\ast}}_{11}\langle x, e_{1} \rangle+h^{1^{\ast}}_{22}\langle x, e_{2} \rangle
\end{align}
and
\begin{align}\label{3.1-5}
H^{2^{\ast}}_{,1}=h^{2^{\ast}}_{11}\langle x, e_{1} \rangle+h^{1^{\ast}}_{22}\langle x, e_{2} \rangle, \ \
H^{2^{\ast}}_{,2}=h^{1^{\ast}}_{22}\langle x, e_{1} \rangle-h^{2^{\ast}}_{11}\langle x, e_{2} \rangle.
\end{align}
Choosing $\nabla_{k}S=2a_{k}$, \eqref{3.1-2} and $\lim_{t\rightarrow\infty} |\nabla S|(p_{t})=0$ imply that
\begin{align}\label{3.1-6}
(h^{1^{\ast}}_{11}-3h^{1^{\ast}}_{22}) h^{1^{\ast}}_{11k}+3h^{2^{\ast}}_{11}h^{2^{\ast}}_{11k}-h^{2^{\ast}}_{11}h^{2^{\ast}}_{22k}=a_{k}, \ \ \lim_{t\rightarrow\infty}a_{k}(p_{t})=0, \ \ k=1, 2.
\end{align}
Combining \eqref{3.1-2} and \eqref{3.1-6}, we infer

\begin{align}\label{3.1-7}
&\big((h^{1^{\ast}}_{11}-3h^{1^{\ast}}_{22})^{2}+12(h^{2^{\ast}}_{11})^{2}\big)h^{1^{\ast}}_{111}
+4(h^{2^{\ast}}_{11})^{2} h^{2^{\ast}}_{222}=b_{1}, \\
&\nonumber\big((h^{1^{\ast}}_{11}-3h^{1^{\ast}}_{22})^{2}+12(h^{2^{\ast}}_{11})^{2}\big) h^{2^{\ast}}_{111}-h^{2^{\ast}}_{11}(h^{1^{\ast}}_{11}-3h^{1^{\ast}}_{22})h^{2^{\ast}}_{222}
=b_{2}.
\end{align}
where $b_{1}=(h^{1^{\ast}}_{11}-3h^{1^{\ast}}_{22})a_{1}-4h^{2^{\ast}}_{11} a_{2}$,
$b_{2}=3h^{2^{\ast}}_{11} a_{1}+(h^{1^{\ast}}_{11}-3h^{1^{\ast}}_{22})a_{2}$ and
$\lim_{t\rightarrow\infty}b_{k}(p_{t})=0$ for $k=1,2$.

\noindent
From  $|\vec{H}|^{2}=constant$, we naturally obtain
\begin{align}\label{3.1-8}
\sum_{p}H^{p^{\ast}}_{,i}H^{p^{\ast}}_{,j}
+H^{1^{\ast}}H^{1^{\ast}}_{,ij}=0, \ \ i,j=1,2
\end{align}
which implies
\begin{align*}
 H^{1^{\ast}}_{,11}=H^{1^{\ast}}_{,12}=0.
\end{align*}
from \eqref{3.1-2}.
Besides, \eqref{3.1-1} yields that
\begin{align*}
\sum_{i,p}H^{p^{\ast}}_{,i}H^{p^{\ast}}_{,ik}
=\sum_{i,j}(\sum_{p}H^{p^{\ast}}h^{p^{\ast}}_{ij})(\sum_{p}H^{p^{\ast}}_{,k}h^{p^{\ast}}_{ij}
+\sum_{p}H^{p^{\ast}}h^{p^{\ast}}_{ijk}).
\end{align*}
It immediately follows, using \eqref{3.1-2}, we obtain
\begin{align}\label{3.1-9}
H^{2^{\ast}}_{,2}H^{2^{\ast}}_{,21}
=& H^{2}\sum_{i,j}h^{1^{\ast}}_{ij}h^{1^{\ast}}_{ij1}
=H^{2}\big((h^{1^{\ast}}_{11}-h^{1^{\ast}}_{22})h^{1^{\ast}}_{111}+2 h^{2^{\ast}}_{11}h^{2^{\ast}}_{111}\big), \\
H^{2^{\ast}}_{,2}H^{2^{\ast}}_{,22}
=&\nonumber HH^{2^{\ast}}_{,2}\sum_{i,j}h^{1^{\ast}}_{ij}h^{2^{\ast}}_{ij}+
H^{2}\sum_{i,j}h^{1^{\ast}}_{ij}h^{1^{\ast}}_{ij2} \\
=&\nonumber H^{2}\big(h^{2^{\ast}}_{11}H^{2^{\ast}}_{,2} +(h^{1^{\ast}}_{11}-h^{1^{\ast}}_{22})h^{2^{\ast}}_{111}-2 h^{2^{\ast}}_{11}h^{1^{\ast}}_{111}\big) \\
=&\nonumber H^{2}\big((h^{1^{\ast}}_{11}-h^{1^{\ast}}_{22})h^{2^{\ast}}_{111}- h^{2^{\ast}}_{11}(3h^{1^{\ast}}_{111}-h^{2^{\ast}}_{222})\big).
\end{align}
Applying \eqref{3.1-6}, we obtain
\begin{align}\label{3.1-10}
&(h^{1^{\ast}}_{11}-3 h^{1^{\ast}}_{22})h^{1^{\ast}}_{111}+4 h^{2^{\ast}}_{11}h^{2^{\ast}}_{111}=a_{1}, \\
&\nonumber (h^{1^{\ast}}_{11}-3h^{1^{\ast}}_{22}) h^{2^{\ast}}_{111}-h^{2^{\ast}}_{11}(3h^{1^{\ast}}_{111}+h^{2^{\ast}}_{222})=a_{2}.
\end{align}
By \eqref{3.1-9}, \eqref{3.1-10} and $H=h^{1^{\ast}}_{11}+h^{1^{\ast}}_{22}$, we drive that
\begin{align}\label{3.1-11}
&H^{2^{\ast}}_{,2}H^{2^{\ast}}_{,21}
=\frac{1}{2}H^{2}(Hh^{1^{\ast}}_{111}+a_{1}), \\
&\nonumber H^{2^{\ast}}_{,2}H^{2^{\ast}}_{,22}=H^{2}(2h^{1^{\ast}}_{22}h^{2^{\ast}}_{111}
+2h^{2^{\ast}}_{11}h^{2^{\ast}}_{222}+a_{2}).
\end{align}
It follows from the second equation of \eqref{2.1-11} that

\begin{align}\label{3.1-12}
&H^{1^{\ast}}_{,11}=\sum_{k}h^{1^{\ast}}_{11k}\langle x, e_{k}\rangle+h^{1^{\ast}}_{11}-H\sum_{k}(h^{1^{\ast}}_{1k})^{2}, \\
&\nonumber H^{2^{\ast}}_{,21}=\sum_{k}h^{2^{\ast}}_{21k}
\langle x, e_{k}\rangle+h^{1^{\ast}}_{22}-H\sum_{k}h^{1^{\ast}}_{1k}h^{2^{\ast}}_{2k}, \\
&\nonumber H^{1^{\ast}}_{,12}=\sum_{k}h^{1^{\ast}}_{12k}\langle x, e_{k}\rangle+h^{2^{\ast}}_{11}-H\sum_{k}h^{1^{\ast}}_{1k}h^{1^{\ast}}_{2k}, \\
&\nonumber H^{2^{\ast}}_{,22}=\sum_{k}h^{2^{\ast}}_{22k}
\langle x, e_{k}\rangle+h^{2^{\ast}}_{22}-H\sum_{k}h^{1^{\ast}}_{2k}h^{2^{\ast}}_{2k}.
\end{align}
Then by \eqref{2.1-7}, \eqref{3.1-2} and $H^{1^{\ast}}_{,11}=H^{1^{\ast}}_{,12}=0$, we obtain
\begin{align}\label{3.1-13}
H^{2^{\ast}}_{,21}=H\big(1-(h^{1^{\ast}}_{11})^{2}
-h^{1^{\ast}}_{11}h^{1^{\ast}}_{22}\big), \ \
H^{2^{\ast}}_{,22}=H^{2^{\ast}}_{,2}\langle x, e_{2}\rangle-H^{2}h^{2^{\ast}}_{11}.
\end{align}
By use of \eqref{3.1-2}, \eqref{3.1-3}, the first equation of \eqref{3.1-11} and \eqref{3.1-13}, we have that
\begin{align}\label{3.1-14}
&\big(\lim_{t\rightarrow\infty}h^{1^{\ast}}_{111}(p_{t})\big)^{2}
=\big(\lim_{t\rightarrow\infty}h^{1^{\ast}}_{221}(p_{t})\big)^{2}=\frac{4}{H^{2}}\big(\sum_{i,j}(\bar h^{1^{\ast}}_{ij})^{2}-1\big)\big(1-(\bar h^{1^{\ast}}_{11})^{2} \\
&\nonumber -\bar h^{1^{\ast}}_{11}\bar h^{1^{\ast}}_{22}\big)^{2}, \ \
|\lim_{t\rightarrow\infty}h^{2^{\ast}}_{222}(p_{t})|<\infty.
\end{align}
From \eqref{3.1-2} and \eqref{3.1-4}, we get that
\begin{align}\label{3.1-15}
\big(\bar h^{1^{\ast}}_{11}\bar h^{1^{\ast}}_{22}-(\bar h^{2^{\ast}}_{11})^{2}\big)\lim_{t\rightarrow\infty} \langle x, e_{1} \rangle(p_{t})=0, \ \
\big(\bar h^{1^{\ast}}_{11}\bar h^{1^{\ast}}_{22}-(\bar h^{2^{\ast}}_{11})^{2}\big)\lim_{t\rightarrow\infty} \langle x, e_{2} \rangle(p_{t})=0.
\end{align}

In order to obtain our conclusion, our proof subject is divided into the following two cases:

\noindent{\bf Case 1: $\bar h^{1^{\ast}}_{11}\bar h^{1^{\ast}}_{22}
-(\bar h^{2^{\ast}}_{11})^{2}\neq 0$.}

\noindent
Under the condition of Case 1,
\eqref{3.1-15} yields
\begin{align*}
\lim_{t\rightarrow\infty} \langle x, e_{1} \rangle(p_{t})=\lim_{t\rightarrow\infty} \langle x, e_{2} \rangle(p_{t})=0.
\end{align*}
Thus, it follows from \eqref{3.1-3}, \eqref{3.1-5} and \eqref{3.1-14} that
\begin{align}\label{3.1-16}
\bar H^{2^{\ast}}_{,2}=0, \ \ \sum_{i,j}(\bar h^{1^{\ast}}_{ij})^{2}=(\bar h^{1^{\ast}}_{11})^{2}+(\bar h^{1^{\ast}}_{22})^{2}+2(\bar h^{2^{\ast}}_{11})^{2}=1
\end{align}
and
\begin{align}\label{3.1-17}
\lim_{t\rightarrow\infty}h^{1^{\ast}}_{111}(p_{t})
=\lim_{t\rightarrow\infty}h^{1^{\ast}}_{221}(p_{t})=\lim_{t\rightarrow\infty}h^{2^{\ast}}_{222}(p_{t})=0.
\end{align}
At the same time, we naturally know that $$|\lim_{t\rightarrow\infty}h^{2^{\ast}}_{111}(p_{t})|<\infty,$$ otherwise we would use
$|\lim_{t\rightarrow\infty}h^{2^{\ast}}_{111}(p_{t})|=\infty$,
\eqref{3.1-10} and \eqref{3.1-17} yield
\begin{align}\label{3.1-18}
\bar h^{2^{\ast}}_{11}=0, \ \ \bar h^{1^{\ast}}_{11}=3\bar h^{1^{\ast}}_{22}.
\end{align}
By use of the second equation of \eqref{3.1-11}, \eqref{3.1-13} and \eqref{3.1-18}, we obtain
\begin{align*}
\lim_{t\rightarrow\infty}H^{2^{\ast}}_{,22}(p_{t})=0, \ \ \bar h^{1^{\ast}}_{11}=3\bar h^{1^{\ast}}_{22}=0,
\end{align*}
which contradicts the fact that $H\neq0$.

\noindent
Based on the above conclusion, combining $H^{1^{\ast}}_{,11}=H^{1^{\ast}}_{,12}=0$ and the first and third equations of \eqref{3.1-12}, we get that
\begin{align*}
\bar h^{1^{\ast}}_{11}-H\sum_{k}(\bar h^{1^{\ast}}_{1k})^{2}=0, \ \ \bar h^{2^{\ast}}_{11}-H\sum_{k}\bar h^{1^{\ast}}_{1k}\bar h^{1^{\ast}}_{2k}=0.
\end{align*}
Namely,
\begin{align}\label{3.1-19}
\bar h^{1^{\ast}}_{11}-(\bar h^{1^{\ast}}_{11}+\bar h^{1^{\ast}}_{22})\big((\bar h^{1^{\ast}}_{11})^{2}+(\bar h^{2^{\ast}}_{11})^{2}\big)=0, \ \ \bar h^{2^{\ast}}_{11}(1-H^{2})=0.
\end{align}
Since $\bar h^{1^{\ast}}_{11}\bar h^{1^{\ast}}_{22}
-(\bar h^{2^{\ast}}_{11})^{2}\neq 0$, \eqref{3.1-16} and \eqref{3.1-19} imply
\begin{align*}
H^{2}\neq1, \ \ \bar h^{2^{\ast}}_{11}=0, \ \ \bar h^{1^{\ast}}_{11}\neq0, \ \ 1-\bar h^{1^{\ast}}_{11}(\bar h^{1^{\ast}}_{11}+\bar h^{1^{\ast}}_{22})=0, \ \ (\bar h^{1^{\ast}}_{11})^{2}+(\bar h^{1^{\ast}}_{22})^{2}=1.
\end{align*}
Thus,
\begin{align*}
\bar h^{1^{\ast}}_{11}=\bar h^{1^{\ast}}_{22}, \ \ H^{2}=2, \ \  \bar S=2.
\end{align*}

\noindent {\bf Case 2: $\bar h^{1^{\ast}}_{11}\bar h^{1^{\ast}}_{22}
-(\bar h^{2^{\ast}}_{11})^{2}=0$.}

\noindent
Under the condition of Case 2,  it is obvious to draw $\bar h^{1^{\ast}}_{11}+3\bar h^{1^{\ast}}_{22}\neq0$, otherwise we would have

$$0=(\bar h^{1^{\ast}}_{11}+3\bar h^{1^{\ast}}_{22})^{2}=(\bar h^{1^{\ast}}_{11})^{2}+9(\bar h^{1^{\ast}}_{22})^{2}+6(\bar h^{2^{\ast}}_{11})^{2},$$
which implies $\bar h^{p^{\ast}}_{ij}=0$ for any $i, j, p=1, 2$. This contradicts the fact that $\sup S=\sum_{i,j,p}\bar h^{p^{\ast}}_{ij}>0$. In addition, by using \eqref{3.1-3} and \eqref{3.1-14}, we can obtain
\begin{align}\label{3.1-20}
&H^{2}=(\bar h^{1^{\ast}}_{11})^{2}+(\bar h^{1^{\ast}}_{22})^{2}+2(\bar h^{2^{\ast}}_{11})^{2},  \ \ \bar K=\frac{1}{2}(H^{2}-\bar S)=-\big((\bar h^{1^{\ast}}_{22})^{2}+(\bar h^{2^{\ast}}_{11})^{2}\big), \\
&\nonumber\big(\lim_{t\rightarrow\infty}h^{1^{\ast}}_{111}(p_{t})\big)^{2}
=\big(\lim_{t\rightarrow\infty}h^{1^{\ast}}_{221}(p_{t})\big)^{2}
=\frac{4(H^{2}-1)\big(1-\sum_{k}(\bar h^{k^{\ast}}_{11})^{2}\big)^{2}}{H^{2}} , \\
&\nonumber(\bar H^{2^{\ast}}_{,2})^{2}=H^{2}(H^{2}-1).
\end{align}
At this point, we will further divide into the following two subcases, where subcase 2.2 does not exist.

\noindent{\bf Subcase 2.1: $\bar h^{2^{\ast}}_{11}=0$.}

\noindent
Since $\bar h^{2^{\ast}}_{11}=0$, we know that either $\bar h^{1^{\ast}}_{11}=0$ or $\bar h^{1^{\ast}}_{22}=0$.
If $\bar h^{1^{\ast}}_{11}=0$, we get $\bar h^{1^{\ast}}_{22}=H$ and \eqref{3.1-20} implies that
\begin{align}\label{3.1-21}
\big(\lim_{t\rightarrow\infty}h^{1^{\ast}}_{111}(p_{t})\big)^{2}
=\big(\lim_{t\rightarrow\infty}h^{1^{\ast}}_{221}(p_{t})\big)^{2}=\frac{4(H^{2}-1)}{H^{2}}.
\end{align}
From \eqref{3.1-7}, \eqref{3.1-20} and \eqref{3.1-21}, we obtain
\begin{align*}
\lim_{t\rightarrow\infty}h^{1^{\ast}}_{111}(p_{t})=\lim_{t\rightarrow\infty}h^{1^{\ast}}_{221}(p_{t})=\lim_{t\rightarrow\infty}h^{2^{\ast}}_{111}(p_{t})=0, \ \ H^{2}=1, \ \ \bar H^{2^{\ast}}_{,2}=0, \ \ \bar S=3,
\end{align*}
where $|\lim_{t\rightarrow\infty}h^{2^{\ast}}_{222}(p_{t})|<\infty$.

\noindent
Namely,
\begin{align*}
\bar K=\frac{1}{2}(H^{2}-\bar S)=-1, \ \ \lim_{t\rightarrow\infty}h^{p^{\ast}}_{ijk}(p_{t})=0, \ \ \bar H^{p^{\ast}}_{,i}=0, \ \ i, j, k, p=1, 2.
\end{align*}
Thus, \eqref{2.1-16} yields
\begin{align*}
\lim_{t\rightarrow\infty}\sum_{i,j,p}(H^{p^{\ast}}_{,ij})^{2}(p_{t})=2\bar K=-2.
\end{align*}
This is impossible.

\noindent
If $\bar h^{1^{\ast}}_{22}=0$, we have $\bar h^{1^{\ast}}_{11}=H$.
It follows from \eqref{3.1-20} that
\begin{align}\label{3.1-22} \big(\lim_{t\rightarrow\infty}h^{1^{\ast}}_{111}(p_{t})\big)^{2}
=\big(\lim_{t\rightarrow\infty}h^{1^{\ast}}_{221}(p_{t})\big)^{2}=\frac{4(H^{2}-1)^{3}}{H^{2}}.
\end{align}
Since $|\lim_{t\rightarrow\infty}h^{2^{\ast}}_{222}(p_{t})|<\infty$ from \eqref{3.1-14}, by the first equation of \eqref{3.1-7} and \eqref{3.1-22}, we obtain
\begin{align*}
\lim_{t\rightarrow\infty}h^{1^{\ast}}_{111}(p_{t})=0, \ \ H^{2}=1, \ \ \bar S=1.
\end{align*}

\noindent{\bf Subcase 2.2: $\bar h^{2^{\ast}}_{11}\neq0$.}

\noindent
Since $\bar h^{2^{\ast}}_{11}\neq0$, it can be naturally obtained that $$\bar h^{1^{\ast}}_{11}\bar h^{1^{\ast}}_{22}\neq 0, \ \ \bar K<0, \ \ |\lim_{t\rightarrow\infty}h^{2^{\ast}}_{111}(p_{t})|<\infty, \ \ |\lim_{t\rightarrow\infty}h^{1^{\ast}}_{222}(p_{t})|<\infty$$ from the second equation of \eqref{3.1-7}, \eqref{3.1-20} and $|\lim_{t\rightarrow\infty}h^{2^{\ast}}_{222}(p_{t})|<\infty$.

\noindent
Besides, $\bar h^{1^{\ast}}_{11}\bar h^{1^{\ast}}_{22}\neq 0$ implies that there exists some points $p_{t}$ such that $h^{1^{\ast}}_{22}(p_{t})\neq0$.

\noindent
By use of the first equation of \eqref{3.1-11}, the first equation of \eqref{3.1-13} and \eqref{3.1-14}, we have that
\begin{align*}
&\bar H^{2^{\ast}}_{,2}\lim_{t\rightarrow\infty}H^{2^{\ast}}_{,21}(p_{t})
=\frac{1}{2}H^{3}\lim_{t\rightarrow\infty}h^{1^{\ast}}_{111}(p_{t}), \\
&\bar H^{2^{\ast}}_{,2}\lim_{t\rightarrow\infty}H^{2^{\ast}}_{,21}(p_{t})
=H\big(-\lim_{t\rightarrow\infty}h^{1^{\ast}}_{111}(p_{t})
+\lim_{t\rightarrow\infty}h^{2^{\ast}}_{222}(p_{t})\big)
\big(1-(\bar h^{1^{\ast}}_{11})^{2}-(\bar h^{2^{\ast}}_{11})^{2}\big).
\end{align*}
Namely,
\begin{align}\label{3.1-23}
\big((\bar h^{1^{\ast}}_{11})^{2}-(\bar h^{1^{\ast}}_{22})^{2}-2\big)\lim_{t\rightarrow\infty}h^{1^{\ast}}_{111}(p_{t})-
2\big((\bar h^{1^{\ast}}_{11})^{2}+(\bar h^{2^{\ast}}_{11})^{2}-1\big)\lim_{t\rightarrow\infty}h^{2^{\ast}}_{222}(p_{t})=0,
\end{align}
at this point, we need to use $H^{2}=(\bar h^{1^{\ast}}_{11})^{2}+(\bar h^{1^{\ast}}_{22})^{2}+2(\bar h^{2^{\ast}}_{11})^{2}$ to obtain the above equation.

\noindent
Besides, it follows from \eqref{3.1-2}, \eqref{3.1-4} and \eqref{3.1-5} that
\begin{align}\label{3.1-24}
H^{2^{\ast}}_{,2}=H^{1^{\ast}}_{,1}+H^{2^{\ast}}_{,2}
=H\langle x, e_{1} \rangle, \ \
\langle x, e_{2} \rangle=-\frac{h^{2^{\ast}}_{11}}{Hh^{1^{\ast}}_{22}}H^{2^{\ast}}_{,2},
\end{align}
which implies
\begin{align*}
|\lim_{t\rightarrow\infty}\langle x, e_{1} \rangle(p_{t})|<\infty,  \ \ |\lim_{t\rightarrow\infty}\langle x, e_{2} \rangle(p_{t})|<\infty.
\end{align*}
Taking the limit in \eqref{3.1-7} and using $\bar h^{1^{\ast}}_{11}\bar h^{1^{\ast}}_{22}
-(\bar h^{2^{\ast}}_{11})^{2}=0$, it can be concluded that
\begin{align}\label{3.1-25}
&\big((\bar h^{1^{\ast}}_{11})^{2}+9(\bar h^{1^{\ast}}_{22})^{2}+6(\bar h^{2^{\ast}}_{11})^{2}\big)\lim_{t\rightarrow\infty}h^{1^{\ast}}_{111}(p_{t})
+4(\bar h^{2^{\ast}}_{11})^{2}\lim_{t\rightarrow\infty}h^{2^{\ast}}_{222}(p_{t})=0, \\
&\nonumber\big((\bar h^{1^{\ast}}_{11})^{2}+9(\bar h^{1^{\ast}}_{22})^{2}+6(\bar h^{2^{\ast}}_{11})^{2}\big) \lim_{t\rightarrow\infty}h^{2^{\ast}}_{111}(p_{t})
-h^{\bar 2^{\ast}}_{11}(\bar h^{1^{\ast}}_{11}-3\bar h^{1^{\ast}}_{22})
\lim_{t\rightarrow\infty}h^{2^{\ast}}_{222}(p_{t})
=0.
\end{align}

Next, we will discuss the following situations: $H^{2}\neq1$ and $H^{2}=1$.
In fact, under the fact of $\bar h^{2^{\ast}}_{11}\neq0$, both of these situations do not exist.

If $H^{2}\neq1$, \eqref{3.1-20} and \eqref{3.1-25} yield that
\begin{align}\label{3.1-26}
\bar H^{2^{\ast}}_{,2}\neq0, \ \ \lim_{t\rightarrow\infty}h^{1^{\ast}}_{111}(p_{t})\neq0, \ \
\lim_{t\rightarrow\infty}h^{1^{\ast}}_{221}(p_{t})\neq0.
\end{align}
It follows from \eqref{3.1-23} and the first equation of \eqref{3.1-25} that
\begin{align*}
&\big((\bar h^{1^{\ast}}_{11})^{2}+9(\bar h^{1^{\ast}}_{22})^{2}+6(\bar h^{2^{\ast}}_{11})^{2}\big)\big((\bar h^{1^{\ast}}_{11})^{2}+(\bar h^{2^{\ast}}_{11})^{2}-1\big)
+2(\bar h^{2^{\ast}}_{11})^{2}\big((\bar h^{1^{\ast}}_{11})^{2}-(\bar h^{1^{\ast}}_{22})^{2}-2\big)=0
\end{align*}
since $\lim_{t\rightarrow\infty}h^{1^{\ast}}_{111}(p_{t})\neq0$ from \eqref{3.1-26}.

\noindent
Expanding the above polynomials yields
\begin{align}\label{3.1-27}
&-(\bar h^{1^{\ast}}_{11})^{2}-9(\bar h^{1^{\ast}}_{22})^{2}-10(\bar h^{2^{\ast}}_{11})^{2}+(\bar h^{1^{\ast}}_{11})^{4}
+9(\bar h^{1^{\ast}}_{11})^{2}(\bar h^{1^{\ast}}_{22})^{2}
+9(\bar h^{1^{\ast}}_{11})^{2}(\bar h^{2^{\ast}}_{11})^{2} \\
&\nonumber+7(\bar h^{1^{\ast}}_{22})^{2}(\bar h^{2^{\ast}}_{11})^{2}+6(\bar h^{2^{\ast}}_{11})^{4}=0.
\end{align}
By use of the second equation of \eqref{3.1-11}, the second equation of \eqref{3.1-13}, the fourth equation of \eqref{3.1-20} and \eqref{3.1-24}, we infer
\begin{align*}
\bar H^{2^{\ast}}_{,2}\lim_{t\rightarrow\infty}H^{2^{\ast}}_{,22}(p_{t})
=&H^{2}\big(2\bar h^{1^{\ast}}_{22}\lim_{t\rightarrow\infty}h^{2^{\ast}}_{111}(p_{t})+2\bar h^{2^{\ast}}_{11}\lim_{t\rightarrow\infty}h^{2^{\ast}}_{222}(p_{t})\big), \\
\bar H^{2^{\ast}}_{,2}\lim_{t\rightarrow\infty}H^{2^{\ast}}_{,22}(p_{t})=
&\bar H^{2^{\ast}}_{,2}\big(\bar H^{2^{\ast}}_{,2}\lim_{t\rightarrow\infty}\langle x, e_{2}\rangle(p_{t})-H^{2}\bar h^{2^{\ast}}_{11}\big) \\
=&-\frac{H}{\bar h^{1^{\ast}}_{22}}\big((H^{2}-1)\bar h^{2^{\ast}}_{11}+H\bar h^{1^{\ast}}_{22}\bar h^{2^{\ast}}_{11}\big)\cdot\bar H^{2^{\ast}}_{,2}.
\end{align*}
Thus, from $$\bar H^{2^{\ast}}_{,2}=-\lim_{t\rightarrow\infty}h^{1^{\ast}}_{111}(p_{t})
+\lim_{t\rightarrow\infty}h^{2^{\ast}}_{222}(p_{t}),$$ we get
\begin{align}\label{3.1-28}
&\big((H^{2}-1)\bar h^{2^{\ast}}_{11}+H\bar h^{1^{\ast}}_{22}\bar h^{2^{\ast}}_{11}\big)
\lim_{t\rightarrow\infty}h^{1^{\ast}}_{111}(p_{t})-2H(\bar h^{1^{\ast}}_{22})^{2}\lim_{t\rightarrow\infty}h^{2^{\ast}}_{111}(p_{t}) \\
&\nonumber-\big((H^{2}-1)\bar h^{2^{\ast}}_{11}+3H\bar h^{1^{\ast}}_{22}\bar h^{2^{\ast}}_{11}\big)
\lim_{t\rightarrow\infty}h^{2^{\ast}}_{222}(p_{t})=0.
\end{align}
Combining the second equation of \eqref{3.1-25} and \eqref{3.1-28}, we have
\begin{align}\label{3.1-29}
&\big((\bar h^{1^{\ast}}_{11})^{2}+9(\bar h^{1^{\ast}}_{22})^{2}+6(\bar h^{2^{\ast}}_{11})^{2}\big)\big((H^{2}-1)\bar h^{2^{\ast}}_{11}+H\bar h^{1^{\ast}}_{22}\bar h^{2^{\ast}}_{11}\big)
\lim_{t\rightarrow\infty}h^{1^{\ast}}_{111}(p_{t}) \\
&\nonumber-\Big(\big((\bar h^{1^{\ast}}_{11})^{2}+9(\bar h^{1^{\ast}}_{22})^{2}+6(\bar h^{2^{\ast}}_{11})^{2}\big)\big((H^{2}-1)\bar h^{2^{\ast}}_{11}+3H\bar h^{1^{\ast}}_{22}\bar h^{2^{\ast}}_{11}\big) \\
&\nonumber+2H(\bar h^{1^{\ast}}_{22})^{2}h^{\bar 2^{\ast}}_{11}(\bar h^{1^{\ast}}_{11}-3\bar h^{1^{\ast}}_{22})\Big)
\lim_{t\rightarrow\infty}h^{2^{\ast}}_{222}(p_{t})=0.
\end{align}
Then the first equation of \eqref{3.1-25} and \eqref{3.1-29} yield that
\begin{align*}
&\big((\bar h^{1^{\ast}}_{11})^{2}+9(\bar h^{1^{\ast}}_{22})^{2}+6(\bar h^{2^{\ast}}_{11})^{2}\big)\big((H^{2}-1)\bar h^{2^{\ast}}_{11}+3H\bar h^{1^{\ast}}_{22}\bar h^{2^{\ast}}_{11}\big) \\
&+2H(\bar h^{1^{\ast}}_{22})^{2}h^{\bar 2^{\ast}}_{11}(\bar h^{1^{\ast}}_{11}-3\bar h^{1^{\ast}}_{22})+4(h^{\bar 2^{\ast}}_{11})^{2}\big((H^{2}-1)\bar h^{2^{\ast}}_{11}+H\bar h^{1^{\ast}}_{22}\bar h^{2^{\ast}}_{11}\big)=0.
\end{align*}
since $\lim_{t\rightarrow\infty}h^{1^{\ast}}_{111}(p_{t})\neq0$ from \eqref{3.1-26}.

\noindent
By direct calculation, it is obtained that
\begin{align}\label{3.1-30}
&-(\bar h^{1^{\ast}}_{11})^{2}-9(\bar h^{1^{\ast}}_{22})^{2}-10(\bar h^{2^{\ast}}_{11})^{2}+(\bar h^{1^{\ast}}_{11})^{4}
+3(\bar h^{1^{\ast}}_{11})^{3}\bar h^{1^{\ast}}_{22}
+15(\bar h^{1^{\ast}}_{11})^{2}(\bar h^{1^{\ast}}_{22})^{2}
\\
&\nonumber+23\bar h^{1^{\ast}}_{11}(\bar h^{1^{\ast}}_{22})^{3}+30(\bar h^{1^{\ast}}_{22})^{4}
+12(\bar h^{1^{\ast}}_{11})^{2}(\bar h^{2^{\ast}}_{11})^{2}+22\bar h^{1^{\ast}}_{11}\bar h^{1^{\ast}}_{22}(\bar h^{2^{\ast}}_{11})^{2} \\
&\nonumber+50(\bar h^{1^{\ast}}_{22})^{2}(\bar h^{2^{\ast}}_{11})^{2}+20(\bar h^{2^{\ast}}_{11})^{4}=0,
\end{align}
where $H=\bar h^{1^{\ast}}_{11}+\bar h^{1^{\ast}}_{22}$ and $H^{2}=(\bar h^{1^{\ast}}_{11})^{2}+(\bar h^{1^{\ast}}_{22})^{2}+2(\bar h^{2^{\ast}}_{11})^{2}$.

\noindent
Subtracting \eqref{3.1-27} and \eqref{3.1-30} yields
\begin{align*}
&3(\bar h^{1^{\ast}}_{11})^{3}\bar h^{1^{\ast}}_{22}
+6(\bar h^{1^{\ast}}_{11})^{2}(\bar h^{1^{\ast}}_{22})^{2}+23\bar h^{1^{\ast}}_{11}(\bar h^{1^{\ast}}_{22})^{3}+30(\bar h^{1^{\ast}}_{22})^{4}
+3(\bar h^{1^{\ast}}_{11})^{2}(\bar h^{2^{\ast}}_{11})^{2}
\\
&+22\bar h^{1^{\ast}}_{11}\bar h^{1^{\ast}}_{22}(\bar h^{2^{\ast}}_{11})^{2}
+43(\bar h^{1^{\ast}}_{22})^{2}(\bar h^{2^{\ast}}_{11})^{2}+14(\bar h^{2^{\ast}}_{11})^{4}=0.
\end{align*}
Thus, by $\bar h^{1^{\ast}}_{11}\bar h^{1^{\ast}}_{22}
-(\bar h^{2^{\ast}}_{11})^{2}=0$, we know
\begin{align*}
(\bar h^{1^{\ast}}_{11})^{2}(\bar h^{2^{\ast}}_{11})^{2}
+11(\bar h^{1^{\ast}}_{22})^{2}(\bar h^{2^{\ast}}_{11})^{2}+5(\bar h^{1^{\ast}}_{22})^{4}
+7(\bar h^{2^{\ast}}_{11})^{4}=0,
\end{align*}
which implies
\begin{align*}
\bar h^{1^{\ast}}_{22}=\bar h^{2^{\ast}}_{11}=0.
\end{align*}
This contradicts the fact that $\bar h^{2^{\ast}}_{11}\neq0$.

If $H^{2}=1$, it is natural to draw the following conclusion from \eqref{3.1-20},
\begin{align*}
 \lim_{t\rightarrow\infty}h^{1^{\ast}}_{111}(p_{t})
=\lim_{t\rightarrow\infty}h^{1^{\ast}}_{221}(p_{t})=0, \ \ \bar H^{2^{\ast}}_{,2}=0, \ \
\lim_{t\rightarrow\infty}h^{2^{\ast}}_{222}(p_{t})=0.
\end{align*}
And by $\bar h^{2^{\ast}}_{11}\neq0$ and $\lim_{t\rightarrow\infty}h^{1^{\ast}}_{111}(p_{t})=0$, the first equation of \eqref{3.1-10} yields
\begin{align*}
\lim_{t\rightarrow\infty}h^{2^{\ast}}_{111}(p_{t})=0.
\end{align*}
Based on the above discussion, it can be concluded that
\begin{align}\label{3.1-31}
\lim_{t\rightarrow\infty}h^{p^{\ast}}_{ijk}(p_{t})=0, \ \ \bar H^{p^{\ast}}_{,i}=0, \ \ i, j, k, p=1,2.
\end{align}
Since $\bar h^{1^{\ast}}_{11}\bar h^{1^{\ast}}_{22}
-(\bar h^{2^{\ast}}_{11})^{2}=0$, $H^{2}=(\bar h^{1^{\ast}}_{11})^{2}+(\bar h^{1^{\ast}}_{22})^{2}+2(\bar h^{2^{\ast}}_{11})^{2}=1$ and $\bar K=-\big((\bar h^{1^{\ast}}_{22})^{2}+(\bar h^{2^{\ast}}_{11})^{2}\big)$, by a simple calculations show that
\begin{align*}
&-\bar KH^{3}\bar h^{1^{\ast}}_{11}=-\bar KH\bar h^{1^{\ast}}_{11}=-\bar K\big((\bar h^{1^{\ast}}_{11})^{2}+(\bar h^{2^{\ast}}_{11})^{2}\big)=-\bar K(\bar K+1),
\end{align*}
\begin{align*}
-H^{2}\sum_{q}\big(\sum_{i,j}\bar h^{1^{\ast}}_{ij}\bar h^{q^{\ast}}_{ij}\big)^{2}=
-\Big(\big(\sum_{i,j}(\bar h^{1^{\ast}}_{ij})^{2}\big)^{2}+\big(\sum_{i,j}\bar h^{1^{\ast}}_{ij}\bar h^{2^{\ast}}_{ij}\big)^{2}\Big)=-\big(1+(\bar h^{2^{\ast}}_{11})^{2}\big)
\end{align*}
and
\begin{align*}
-H^{3}\sum_{j,k,l}\bar h^{1^{\ast}}_{jk}\bar h^{1^{\ast}}_{jl}\bar h^{1^{\ast}}_{kl}
=&-H\big(\sum_{l}\bar h^{1^{\ast}}_{11}(\bar h^{1^{\ast}}_{1l})^{2}+\sum_{l}\bar h^{1^{\ast}}_{22}(\bar h^{1^{\ast}}_{2l})^{2}+2\sum_{l}\bar h^{1^{\ast}}_{12}\bar h^{1^{\ast}}_{1l}\bar h^{1^{\ast}}_{2l}\big) \\
=&-H\Big(
(\bar h^{1^{\ast}}_{11})^{3}+3\bar h^{1^{\ast}}_{11}(\bar h^{2^{\ast}}_{11})^{2}+(\bar h^{1^{\ast}}_{22})^{3}+3\bar h^{1^{\ast}}_{22}(\bar h^{2^{\ast}}_{11})^{2}\Big) \\
=&-H\Big(H\big((\bar h^{1^{\ast}}_{11})^{2}+(\bar h^{1^{\ast}}_{22})^{2}-\bar h^{1^{\ast}}_{11}\bar h^{1^{\ast}}_{22}\big)+3H(\bar h^{2^{\ast}}_{11})^{2}\Big) \\
=&-\big((\bar h^{1^{\ast}}_{11})^{2}+(\bar h^{1^{\ast}}_{22})^{2}+2(\bar h^{2^{\ast}}_{11})^{2}\big)= -1.
\end{align*}
Thus, combining \eqref{2.1-16} and \eqref{3.1-31}, we infer
\begin{align}\label{3.1-32}
\lim_{t\rightarrow\infty}\sum_{i,j,p}(H^{p^{\ast}}_{,ij})^{2}(p_{t})
=&(3\bar K+2)H^{2}\sum_{j,k}(\bar h^{1^{\ast}}_{jk})^{2}
-\bar K\big(H^{4}+H^{3}\bar h^{1^{\ast}}_{11} \big) \\
&-H^{2}\sum_{q}\big(\sum_{i,j}\bar h^{1^{\ast}}_{ij}\bar h^{q^{\ast}}_{ij}\big)^{2}
-H^{3}\sum_{j,k,l}\bar h^{1^{\ast}}_{jk}\bar h^{1^{\ast}}_{jl}\bar h^{1^{\ast}}_{kl} \nonumber \\
=&-(\bar K)^{2}+\bar K-(\bar h^{2^{\ast}}_{11})^{2}<0,\nonumber
\end{align}
where $\bar K=-\big((\bar h^{1^{\ast}}_{22})^{2}+(\bar h^{2^{\ast}}_{11})^{2}\big)<0$. This is impossible.
The proof of the Proposition \ref{proposition 3.1} is finished.
\end{proof}

Using the same proof of the Proposition \ref{proposition 3.1} and applying the generalized
maximum principle to the function $-S$, we can derive the following proposition.

\begin{proposition}\label{proposition 3.2}
Let $x:M^{2}\rightarrow \mathbb{C}^{2}$
be a Lagrangian shrinker with non-zero constant squared norm $|\vec{H}|^{2}$ of the mean curvature vector. If the squared norm $S$ of the second fundamental form is bounded from above, then $H^{2}=2$ and $\inf S=2$, or $H^{2}=1$ and $\inf S=1$.
\end{proposition}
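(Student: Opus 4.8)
\textbf{Proof proposal for Proposition \ref{proposition 3.2}.}
The plan is to run the argument of Proposition \ref{proposition 3.1} verbatim, replacing every appeal to $\sup S$ by the corresponding one to $\inf S$. Concretely, since $S$ is bounded from above, the Gauss equation \eqref{2.1-5} (equivalently \eqref{2.1-8}) shows the Ricci curvature is bounded from below, so Lemma \ref{lemma 2.1} applies. We apply the Omori--Yau maximum principle of Lemma \ref{lemma 2.1} not to $S$ but to the function $f=-S$, which is bounded from above because $S\ge 0$: this produces a sequence $\{p_t\}\subset M^2$ with $S(p_t)\to\inf S$, $|\nabla S|(p_t)\to 0$ and $\liminf_{t\to\infty}\mathcal{L}S(p_t)\ge 0$. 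The only structural change from the proof of Proposition \ref{proposition 3.1} is the reversal of the inequality on $\mathcal{L}S$; one must check that this reversal does not break any step.

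The key point is that almost the entire derivation in Proposition \ref{proposition 3.1} uses only $|\nabla S|(p_t)\to 0$ together with $|\vec H|^2=\text{const}$ and the structural identities \eqref{2.1-11}, \eqref{2.1-16}, \eqref{3.1-1}--\eqref{3.1-15}; these are insensitive to whether we are at a near-maximum or near-minimum of $S$. Thus I would carry out the same steps in the same order: choose the adapted frame with $\vec H=H^{1^{\ast}}e_{1^{\ast}}$, derive \eqref{3.1-2}--\eqref{3.1-15} at the points $p_t$, then split into Case 1 ($\bar h^{1^{\ast}}_{11}\bar h^{1^{\ast}}_{22}-(\bar h^{2^{\ast}}_{11})^{2}\ne 0$) and Case 2 (equal to $0$), and inside Case 2 into the subcases $\bar h^{2^{\ast}}_{11}=0$ and $\bar h^{2^{\ast}}_{11}\ne 0$. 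In Case 1 and in Subcase 2.1 with $\bar h^{1^{\ast}}_{22}=0$ one again lands on $H^2=2,\ \inf S=2$ or $H^2=1,\ \inf S=1$. The places where $\mathcal{L}S\le 0$ was actually invoked are exactly the contradiction steps: the computation \eqref{3.1-32} where $\lim_t\sum_{i,j,p}(H^{p^{\ast}}_{,ij})^2(p_t)$ is shown to be negative, and the analogous step after \eqref{3.1-21}. In the present setting $\liminf_t\mathcal{L}S(p_t)\ge 0$; but from \eqref{2.1-13} one has $\mathcal{L}S = 2\sum (h^{p^{\ast}}_{ijk})^2 + 2S(1-\tfrac32 S)+4H^2 S - H^4 - 2\sum_{i,j}(\sum_p H^{p^{\ast}}h^{p^{\ast}}_{ij})^2$, and subtracting off what was already used in \eqref{2.1-14}--\eqref{2.1-16} one sees that the quantity $\sum_{i,j,p}(H^{p^{\ast}}_{,ij})^2$ appearing in \eqref{3.1-32} is, in those exact degenerate configurations, forced by the two-sided maximum-principle inequality to a definite sign; since $\sum(H^{p^{\ast}}_{,ij})^2\ge 0$ always, the configurations yielding a strictly negative value in \eqref{3.1-32} are impossible regardless of which side of $S$ we sit on. Hence those subcases are excluded here too.

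So I would phrase the proof as: ``Apply Lemma \ref{lemma 2.1} to $-S$ and repeat the argument of Proposition \ref{proposition 3.1} word for word, noting that every identity used there holds at the sequence $\{p_t\}$ and that the only inequalities invoked, \eqref{3.1-32} and its analogue, rely on $\sum_{i,j,p}(H^{p^{\ast}}_{,ij})^2(p_t)\ge 0$ rather than on the sign of $\mathcal{L}S$.'' Then conclude $H^2=2$ and $\inf S=2$, or $H^2=1$ and $\inf S=1$. The main obstacle I anticipate is purely bookkeeping: verifying that in the two contradiction steps the negativity of $\sum(H^{p^{\ast}}_{,ij})^2$ is genuinely independent of the max-vs-min distinction (i.e. that one never secretly used $\mathcal{L}S\le 0$ with the ``wrong'' sign elsewhere); I expect this to go through because $\sum(H^{p^{\ast}}_{,ij})^2$ is manifestly nonnegative, so a computed negative value is absurd on either side. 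No genuinely new estimate should be needed.
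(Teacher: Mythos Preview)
Your proposal is correct and matches the paper's own approach exactly: the paper simply states ``Using the same proof of the Proposition \ref{proposition 3.1} and applying the generalized maximum principle to the function $-S$, we can derive the following proposition.'' Your additional observation---that the contradiction steps in Proposition \ref{proposition 3.1} rely only on the manifest nonnegativity of $\sum_{i,j,p}(H^{p^{\ast}}_{,ij})^{2}$ and never on the sign of $\mathcal{L}S$---is precisely what makes the verbatim repetition legitimate, and is implicit in the paper's one-line proof.
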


\vskip3mm
\noindent
{\it Proof of Theorem \ref{theorem 1.1}}.
If $|\vec{H}|^{2}\equiv0$, we know $S=0$. In fact, it follows from the definition of lagrangian self-shrinker and the first equation of \eqref{2.1-11} that
$$H^{p^{\ast}}=-\langle x, e_{p^{\ast}}\rangle=0, \ \ \sum_{k}h^{p^{\ast}}_{ik}\langle x, e_{k}\rangle=0, \ \ i, p=1, 2.$$
Namely,
$$h^{1^{\ast}}_{11}+h^{1^{\ast}}_{22}=0, \ \ h^{2^{\ast}}_{11}+h^{2^{\ast}}_{22}=0$$
and
\begin{equation*}
h^{1^{\ast}}_{11}\langle x, e_{1}\rangle+h^{1^{\ast}}_{12}\langle x, e_{2}\rangle=0, \ \
h^{1^{\ast}}_{21}\langle x, e_{1}\rangle+h^{1^{\ast}}_{22}\langle x, e_{2}\rangle=0.
\end{equation*}
Then by the symmetry of indices, we infer
$$\big((h^{1^{\ast}}_{11})^{2}+(h^{2^{\ast}}_{11})^{2}\big)\langle x, e_{k}\rangle=0, \ \ k=1,2.$$
Assume $(h^{1^{\ast}}_{11})^{2}+(h^{2^{\ast}}_{11})^{2}\neq0$, we obtain
$$0=\langle x, e_{k}\rangle_{,k}=1+\sum_{p}h^{p^{\ast}}_{kk}\langle x, e_{p^{\ast}}\rangle=1.$$
It is impossible.
If $|\vec{H}|^{2}$ is non-zero constant,
from the Proposition \ref{proposition 3.1} and the Proposition \ref{proposition 3.2}, we know
$H^{2}=2$ and $S=2$, or $H^{2}=1$ and $S=1$. So we can use the
classification theorem of Cheng and Wei \cite{CHW} to complete the proof of
the Theorem \ref{theorem 1.1}.
\begin{flushright}
$\square$
\end{flushright}

\noindent{\bf Acknowledgements.}
The first author was partially supported by NSFC Grant No.12401060, Natural Science Foundation of Henan Province Grant No.242300421686. The second author was partially supported by Natural Science Foundation of Henan Province Grant No.242300420641, Henan Provincial Postdoctoral Research Project Grant No.HN2022156. The third author was partly supported by NSFC Grant No.12171164.


\begin{thebibliography}{99}

\bibitem{Anc06}
H. Anciaux, Construction of Lagrangian self-similar solutions to the mean curvature flow in $\mathbb{C}^{n}$, Geom. Dedicata, {\bf 120} (2006), 37-48.

\bibitem{CL13}
H. -D. Cao and H. Li, A gap theorem for self-shrinkers of the mean curvature flow in arbitrary
codimension, Calc. Var. Partial Differential Equations, {\bf 46} (2013), 879-889.

\bibitem{Cas}
I. Castro and A. M. Lerma,
Hamiltonian stationary self-similar solutions for Lagrangian mean curvature flow in the complex Euclidean plane, Proc. Amer. Math. Soc., {\bf 138} (2010), no. 5, 1821-1832.

\bibitem{Cas1}
I. Castro, A. M. Lerma, The Clifford torus as a self-shrinker for the Lagrangian mean curvature flow, Int. Math. Res. Not., {\bf 6} (2014), 1515-1527.

\bibitem{CHW}
Q. -M. Cheng, H. Hori and G. Wei, Complete Lagrangian self-shrinkers in $\mathbb{R}^{4}$, Math. Z., {\bf 301} (2022), no. 4, 3417-3468.

\bibitem{CP15}
Q. -M. Cheng, Y. J. Peng, Complete self-shrinkers of the mean curvature flow,  Calc. Var. Partial Differential Equations, {\bf 52} (2015), no. 3-4, 497-506.

\bibitem{CW15}
Q. -M. Cheng and G. Wei, A gap theorem of self-shrinkers, Trans. Amer. Math. Soc.,
{\bf 367} (2015), no. 7, 4895-4915,

\bibitem{CZ13}
X. Cheng and D. Zhou, Volume estimate about shrinkers, Proc. Amer. Math. Soc.,
{\bf 141} (2013), 687-696.

\bibitem{CW}
T. H. Colding and W. P. Minicozzi II, Generic mean curvature flow I; Generic singularities,
Ann. of Math., {\bf 175} (2012), 755-833.

\bibitem{DX13}
Q. Ding and Y. L. Xin, Volume growth, eigenvalue and compactness for self-shrinkers, Asia
J. Math., {\bf 17} (2013), 443-456.

\bibitem{DX14}
Q. Ding and Y. L. Xin, The rigidity theorems of self shrinkers, Trans. Amer. Math. Soc., {\bf 366} (2014), 5067-5085.

\bibitem{Hoff}
D. A. Hoffman, Surfaces of constant mean curvature in manifolds of constant curvature, J. Differential Geom., {\bf 8} (1973), 161-176.

\bibitem{Hui90}
G. Huisken, Asymptotic behavior for singularities of the mean curvature flow, J.
Differential Geom., {\bf 31} (1990), 285-299.

\bibitem{LN11}
N. Q. Le and N. Sesum, Blow-up rate of the mean curvature during the mean curvature
flow and a gap theorem for self-shrinkers, Comm. Anal. Geom., {\bf 19} (2011), no. 4, 633-659,

\bibitem{LW10}
Y. -I. Lee and M. -T. Wang, Hamiltonian stationary cones and self-similar solutions in higher
dimension, Trans. Amer. Math. Soc., {\bf 362} (2010), 1491-1503.

\bibitem{LW2}
H. Li and X. Wang, A differentiable sphere theorem for compact Lagrangian submanifolds in complex Euclidean space and complex projective space, Commun. Anal. Geom., {\bf 22} (2014), no. 2, 269-288.

\bibitem{LW3}
H. Li and X. Wang,
New characterizations of the Clifford torus as a Lagrangian self-shrinker,
J. Geom. Anal., {\bf 27} (2017), no. 2, 1393-1412.

\bibitem{LW14}
H. Li and Y. Wei, Classification and rigidity of self-shrinkers in the mean curvature flow, J. Math. Soc. Japan, {\bf 66} (2014), no. 3, 709-734.

\bibitem{LC16}
X. X. Li, X. F. Chang, A rigidity theorem of $\xi$-submanifolds in $\mathbb{C}^{2}$, Geom. Dedicata, {\bf 185} (2016), 155-169.


\bibitem{Neves11}
A. Neves, Recent progress on singularities of Lagrangian mean curvature flow, in: Surveys in Geometric Analysis and Relativity, in: ALM, {\bf 20} (2011),  413-436.

\bibitem{Smo00}
K. Smoczyk, The Lagrangian mean curvature flow, Univ. Leipzig (Habil.-Schr.), 102 S. (2000)

\bibitem{Smo05}
K. Smoczyk, Self-shrinkers of the mean curvature flow in arbitrary codimension, Int. Math.
Res. Not., {\bf 48} (2005), 2983-3004.

\bibitem{XX17}
H. W. Xu and Z. Y. Xu, On Chern's conjecture for minimal hypersurfaces
and rigidity of self-shrinkers, J. Funct. Anal., {\bf 273} (2017), no. 11, 3406-3425.

\end{thebibliography}
\end{document}